\documentclass[11pt,a4paper, parskip=half]{scrartcl} 
\usepackage[utf8]{inputenc}
\usepackage[english]{babel}
\usepackage[T1]{fontenc}
\usepackage{lmodern}
\usepackage[left=3cm,right=3cm,top=3cm,bottom=3cm]{geometry}
	\setlength{\parindent}{15pt}		

\usepackage[runin]{abstract}
    \abslabeldelim{.}

\usepackage{todonotes}

\usepackage{amsmath, amsfonts, amssymb, amsthm, amstext}
\usepackage{mathtools}
\usepackage{mathrsfs}
\usepackage{mathdots}
\usepackage{cases}

\font\mfett=cmmib10 at11pt
 at9pt
\newcommand\mycom[2]{\genfrac{}{}{0pt}{}{#1}{#2}}

\usepackage{enumitem}

\usepackage{hyperref} 
\usepackage[capitalise]{cleveref}
	
\newcounter{thm}
\numberwithin{thm}{section}
\numberwithin{equation}{section}

	\newtheoremstyle{myplain}		
			{}			
			{}			
			{\itshape}				
			{}				
			{\sffamily\bfseries}				
			{.}		
			{ }				
			{\thmname{#1}\thmnumber{ #2}\textnormal{\textsf{\thmnote{ (#3)}}}}			
    \newtheoremstyle{mybreak}
            {}{}{}{}{\sffamily\bfseries}{.}{\newline}
            {\thmname{#1}\thmnumber{ #2}\textnormal{\textsf{\thmnote{ (#3)}}}}
	\newtheoremstyle{mydef}
			{}{}{}{}{\sffamily\bfseries}{.}{ }
			{\thmname{#1}\thmnumber{ #2}}
	\newtheoremstyle{myrem}
			{}{}{}{}{\sffamily\itshape}{.}{ }
			{\thmname{#1}\thmnumber{ #2}}

%

\theoremstyle{myplain}
	\newtheorem{theorem}[thm]{Theorem}

    \newtheorem{corollary}[thm]{Corollary}
\theoremstyle{mybreak}
	\newtheorem{algorithm}[thm]{Algorithm}
\theoremstyle{mydef}
	
	\newtheorem{remark}[thm]{Remark}
\theoremstyle{myrem}

	\def\gamra{\hbox{\mfett\char013}} 

\newcommand{\argmax}{\mathop{\mathrm{argmax}}}
\newcommand{\argmin}{\mathop{\mathrm{argmin}}}
\newcommand{\atan}{\mathop{\mathrm{atan2}}}

\allowdisplaybreaks
\def\sumprime_#1^#2{
    \setbox0=\hbox{$\scriptstyle{#1}$}
    \setbox1=\hbox{$\scriptstyle{#2}$}
    \setbox2=\hbox{$\displaystyle{\sum}$}
    \setbox4=\hbox{${}^\prime\mathsurround=0pt$}
    \dimen0=.5\wd0 \advance\dimen0 by-.5\wd2
    \ifdim\dimen0>0pt
        \ifdim\dimen0>\wd4 \kern\wd4
        \else\kern\dimen0
        \ifdim\dimen1>\wd4 \kern\wd4
        \else\kern\dimen1
    \fi\fi\fi
\mathop{{\sum}^\prime}_{\kern-\wd4 #1}^{\kern-\wd4 #2}
}

\title{\LARGE Exact Reconstruction of Sparse Non-Harmonic Signals from Fourier Coefficients}
\author{Markus Petz\footnote{Institute for Numerical and Applied Mathematics,n G\"ottingen University, Lotzestr.\ 16-18, 37083 G\"ottingen, Germany, \{m.petz,plonka,n.derevianko\}@math.uni-goettingen.de} \quad Gerlind Plonka$^{*}$\footnote{Corresponding author} \quad  Nadiia Derevianko$^{*}$}
\date{}

\begin{document}
	\let\oldproofname=\proofname
	\renewcommand{\proofname}{\itshape\sffamily{\oldproofname}}

\maketitle

\begin{abstract}
In this paper, we derive a new reconstruction method for real non-harmonic Fourier sums, i.e., real signals which can be represented as sparse exponential sums of the form  $f(t) = \sum_{j=1}^{K} \gamma_{j} \, \cos(2\pi a_{j} t + b_{j})$,  where the frequency parameters $a_{j} \in {\mathbb R}$ (or $a_{j} \in {\mathrm i} {\mathbb R}$) are pairwise different. Our  method is based on the recently proposed stable iterative rational approximation algorithm in \cite{NST18}. For signal reconstruction we use a set of classical Fourier coefficients of $f$ with regard to a fixed interval $(0, P)$ with $P>0$.  Even though all terms of $f$ may be non-$P$-periodic, our reconstruction method requires 
at most $2K+2$ Fourier coefficients $c_{n}(f)$ to recover all parameters of $f$.
We show that in the case of exact data, the proposed iterative algorithm terminates after at most $K+1$ steps.  
The algorithm can also detect the number $K$ of terms of $f$, if $K$ is  a priori unknown  and  $L>2K+2$ Fourier coefficients are available.
Therefore our method provides a new stable alternative to the known numerical approaches for the recovery of exponential sums that are based on Prony's method.\\
\textbf{Keywords:}  sparse exponential sums, non-harmonic Fourier sums, reconstruction of sparse non-periodic signals, rational approximation, AAA algorithm, barycentric representation,
Fourier coefficients.\\
\textbf{AMS classification:}
41A20, 42A16, 42C15, 65D15, 94A12.
\end{abstract}

\section{Introduction}
Classical Fourier analysis methods provide for any real square integrable signal $f(t)$   a Fourier  series representation on a given interval  $(0,P)$, $P>0$, of the form 
\begin{equation}\label{fo} f(t) =  \sum_{n= - \infty}^{\infty} c_{n}(f) \, {\mathrm e}^{2\pi {\mathrm i}n t/P}
= \frac{\alpha_{0}\, \cos(\beta_{0}) }{2} +\sum_{n=1}^{\infty} \alpha_{n} \, \cos\left(\frac{2\pi n t}{P}  - \beta_{n}\right)
\end{equation}  
with Fourier coefficients 
$$ c_{n}(f) = \frac{1}{P}  \int\limits_{0}^{P} f(t) \, {\mathrm e}^{2\pi {\mathrm i} nt/P} \, {\mathrm d}t , \qquad n \in {\mathbb Z},$$
and $\alpha_{n}=2\, |c_{n}(f)|$, $\beta_{n} = \atan(\mathrm{Im} \, c_{n}(f), \mathrm{Re} \, c_{n}(f))$ for  $n \ge 0$, where $\atan$  denotes the modified  inverse tangent, 
see e.g.\ \cite{PPST18}, Chapter 1.
If $f$ is $P$-periodic and differentiable, then its Fourier series (\ref{fo})  converges uniformly to $f$.
 However, if $f$ is smooth but not $P$-periodic, then the $P$-periodization  of $f$ ``forced'' by the Fourier series representation in (\ref{fo}) usually 
 leads to a discontinuity at the interval boundaries $t=0$ and  $t=P$, respectively, and thus to a slow decay of the Fourier coefficients.

In applications, it frequently happens that a signal is only given on an interval of length $P$, where it appears to be non-periodic,
even if $f$  may be periodic with a certain period $P_{1}$ which is not of the form $P/n$ for some positive integer $n$.
Considering for example the signal 
\begin{equation}\label{bsp} f(t) = \cos(2\pi \sqrt{2} t) + \cos(2\pi \sqrt{3} t), 
\end{equation}
which contains only two different frequency parameters, 
we observe that this signal is non-periodic with regard to any interval $(0, P)$. For $P=1$, the  corresponding Fourier series is given by
$$  \resizebox{\textwidth}{!}{$
f(t) = \!\! \sum\limits_{n=-\infty}^{\infty} \!\!\left( \frac{\sin(\sqrt{2}\pi) ( \sqrt{2} \cos(\sqrt{2} \pi) + {\mathrm  i} \, n \sin(\sqrt{2} \pi) )}{\pi(2-n^{2})}  + \frac{\sin(\sqrt{3}\pi) ( \sqrt{3} \cos(\sqrt{3} \pi) + {\mathrm  i} \, n \sin(\sqrt{3} \pi) )}{\pi(3-n^{2})}  \right)
\, {\mathrm e}^{2\pi {\mathrm i} nt}. $}$$
Thus, the question occurs, how to reconstruct a non-harmonic Fourier sum, i.e., how to compute the much more informative representation (\ref{bsp}) directly from suitable measurements of $f$.

\textbf{Contents of this paper.} 
The goal of this paper is to reconstruct  non-harmonic Fourier sums $f$  
of the form 
\begin{equation}\label{1.1}
f(t) = \sum_{j=1}^K \, \gamma_j \, \cos(2 \pi a_j  t + b_j) = \sum_{j=1}^{K} f_{j}(t)
\end{equation}
from a finite number of its classical Fourier coefficients $c_{n}(f)$ corresponding to a Fourier series of $f$ on $(0, P)$. 
Here, we assume that 
$K \in {\mathbb N}$, $\gamma_{j} \in (0, \infty)$, and  $(a_{j}, \, b_{j}) \in (0, \infty) \times  [0, \, 2\pi)$, 
and that the frequency parameters $a_{j}$ are pairwise distinct.
As we will show in the sequel, the restrictions made for $K, \, \gamma_{j}, \, a_{j}$, and $b_{j}$ will ensure uniqueness of the presentation (\ref{1.1}).
Note that $f$ in (\ref{1.1}) admits real (nonnegative) frequency parameters $a_{j}$ and therefore essentially generalizes usual trigonometric polynomials. 
The example in (\ref{bsp}) with frequencies $\sqrt{2}$ and $\sqrt{3}$ is covered by our model (\ref{1.1}) taking $K=2$, $\gamma_{1}= \gamma_{2} = 1$, $a_{1}= \sqrt{2}$, $a_{2} =\sqrt{3}$,  and $b_{1}=b_{2}=0$.
Observe that the function $f$ in (\ref{1.1}) is only $P$-periodic for some $P >0$ if all parameters $a_{j}$ can be written in the form $a_{j} = c \, q_{j}$ with a positive constant $c \in {\mathbb R}$ and a rational number $q_{j} \ge 0$, i.e., only in this case, there exists a real number $P >0$ such that $f(t+P) = f(t)$ for all $t \in {\mathbb R}$. 

\medskip

In Section 2 we show that our model (\ref{1.1}) is well-defined, i.e., that  all parameters  $K, a_{j}, \, b_{j}, \gamma_{j}$, $j=1,\ldots , K$, are (with the given restrictions) uniquely determined for a non-harmonic Fourier sum $f$. 
If all terms  $f_{j}$ of $f$ are non-$P$-periodic, i.e., if all frequency parameters $a_{j} > 0$ in  (\ref{1.1}) satisfy that $a_{j} \not\in \frac{1}{P} {\mathbb N}$, then it is shown that the modified Fourier coefficients $\tilde{c}_{n}(f) := \textrm{Re}\, c_{n}(f) + \frac{1}{n} \textrm{Im} \, c_{n}(f)$, $n >0$, have a special structure of the  form $r(n^{2})$, where $r(z)$ is a rational function of type $(K-1,K)$. Conversely, $r(z)$ already provides all information to find the parameters 
determining $f$ in (\ref{1.1}).

Section 3 is devoted to the new reconstruction method.
Using a modification of the recently proposed AAA algorithm for iterative rational approximation, see \cite{NST18}, we compute $r(z)$ from a set of (at least) $2K+1$ classical Fourier coefficients of $f$. Then a partial fraction decomposition together with a non-linear bijective transform 
provides the wanted parameters in (\ref{1.1}).
Numerical stability of the rational approximation algorithm is ensured using a barycentric representation of the numerator and the denominator polynomial of $r(z)$. Compared to other rational interpolation algorithms, a further important advantage of the employed modified AAA algorithm is that we  do not need a priori knowledge on the number $K$ of terms in (\ref{1.1}) but can determine $K$ in the iteration process, supposed that $L \ge 2K+1$ Fourier coefficients are available.

We show in Section 4, that a signal $f$ with $K$ non-$P$-periodic terms $f_{j}$ as in (\ref{1.1})  can already be determined from $2K$ Fourier coefficients $c_{n}(f)$  with $k \in \Gamma \subset {\mathbb N}$. Moreover, our method based on the AAA algorithm always provides the wanted rational function $r(z)$ after $K$ iteration steps (and using $K+1$ modified Fourier coefficients).

In Section 5, our new  reconstruction method is generalized to the case that $f$ in (\ref{1.1}) also contains $P$-periodic terms $f_{j}$ with frequencies 
$a_{j} \in \frac{1}{P} {\mathbb N}$. It turns out that there is no a priori information needed about possibly occurring $P$-periodic terms of $f$.  In this case, we first compute the rational function $r(z)$ that determines the non-$P$-periodic part of $f$, where we again employ the  modified AAA algorithm from Section 3.  Afterwards, the $P$-periodic terms of $f$ can be found in a post-processing step, if all $c_{n_{j}}(f)$ with $n_{j} = Pa_{j}$ are contained in the given set of Fourier coefficients.
In particular, we show that $f$ can be always completely recovered from $2K+2$ Fourier coefficients. 

In Section 6, we generalize  the model in (\ref{1.1}). Beside supposing $(a_{j}, \, b_{j}) \in (0, \infty) \times [0, 2\pi)$, we can also admit parameters
$(a_{j}, \, b_{j}) \in {\mathrm i} (0, \infty) \times {\mathrm i} {\mathbb R}$.  By $\cos({\mathrm i}x) = \cosh(x)$,  this leads  to terms of the form $\gamma_{j}\cosh((-{\mathrm i})(2\pi a_{j} + b_{j})$ in (\ref{1.1}). The considered generalization still admits the same rational structure of Fourier coefficients  and can therefore be treated similarly as (\ref{1.1}) with the proposed reconstruction method.

Finally we provide some numerical experiments. The Matlab implementation of our reconstruction algorithm is provided at the Software section of our homepage\newline
 \url{http://na.math.uni-goettingen.de}.

\medskip

\textbf{Related literature.}
Our model (\ref{1.1}) can be viewed as a sparse expansion into exponentials with $2K$ terms via Euler's identity, i.e.,
$$ f(t) = \sum_{j=1}^{K} \left( \frac{\gamma_{j}}{2} {\mathrm e}^{{\mathrm i} b_{j}} \right)  \, {\mathrm e}^{2\pi {\mathrm i} a_{j} t} + \left( \frac{\gamma_{j}}{2} {\mathrm e}^{-{\mathrm i} b_{j}} \right)  \, {\mathrm e}^{-2\pi {\mathrm i} a_{j} t}. $$
Exponential sums have been extensively studied within the last years, based on Prony's method and its relatives, see  e.g.\ \cite{BDB10,BM05,CL18,FMP12,PPT11,PSK18,PT14,PT10,PT2013,RK89,VMB02,ZP18}.
To reconstruct an exponential sum via Prony's method, one usually employs equidistant function values $f(t_{0} + h\ell)$, $\ell = 0, \ldots , L$, and the number of given values should be at least $2M$, where $M$ is the number of terms in the exponential sum. In our case, the number of exponential terms is $M=2K$, but the symmetry properties can be exploited such that the samples $f(h\ell)$, $\ell=0, \ldots, 2K-1$, are theoretically sufficient for the recovery of $f$ in the noise-free case, see e.g.\ \cite{PSK18}.
However, Prony's method involves Hankel or Toeplitz matrices with possibly high condition numbers, and therefore requires a very careful numerical treatment.
Our new method for reconstruction of signals of the form (\ref{1.1}) in this paper is based on rational approximation and can be seen as a good alternative to the Prony reconstruction approach.

Another way to look at the model  (\ref{1.1}) is to view it as a special case of  a signal decomposition into so-called intrinsic mode functions (IMFs)  in adaptive data analysis, see \cite{Huang}.
Empirical mode decomposition (EMD) is based on a model that decomposes the  signal $f$ into $K$ IMFs,
\begin{equation}\label{emd} f(t) = \sum_{j=1}^{K}  \gamma_{j}(t) \, \cos( \phi_{j}(t)) \end{equation}
with  nonnegative envelope functions $\gamma_{j}(t)$ and  so-called instantaneous phase functions $\phi_{j}(t)$, see e.g.\ \cite{Huang}.
As already pointed out in \cite{DLW11}, despite certain restrictions, as e.g.\ that $\gamma_{j}(t)$ and $\phi_{j}(t)$ are smooth with  $\gamma_{j}(t) \ge 0$ and $\phi_{j}'(t) \ge 0$ for $t \in {\mathbb R}$, a representation of the form (\ref{emd}) is far from being unique.
For example, the function $f(t)$ in (\ref{bsp}) has the form  (\ref{emd})
with $K=2$, constant functions $\gamma_{1}(t)$, $\gamma_{2}(t)$, and with 
$\phi_{1}(t)= 2\pi \, \sqrt{2}t$ and $\phi_{2}(t)= 2\pi \, \sqrt{3}t$.
However, $f(t)$ in (\ref{bsp})  can also be written as a single IMF  in $\Big[- \Big(\frac{\sqrt{3} + \sqrt{2}}{4}\Big), \frac{\sqrt{3} + \sqrt{2}}{4}\Big]$, 
$$ f(t) =\left(2 + 2 \, \cos(2\pi \, (\sqrt{3} - \sqrt{2})t \right)^{1/2} \, \cos\left(2\pi \, \frac{\sqrt{2} + \sqrt{3}}{2} \, t\right).
$$
The non-uniqueness of the model (\ref{emd}) often prevents a simple interpretation of the obtained decomposition.

Compared to (\ref{emd}), the main advantages of the non-harmonic Fourier sum  (\ref{1.1}) are  that  
the representation of $f$  is unique, 
and, that the model (\ref{1.1}) has a direct physical interpretation, similarly  to classical Fourier sums.

There are also other approaches to represent signals by adaptive generalized Fourier sums using the so-called Takenaka-Malmquist  basis, an adaptive orthonormal basis, see \cite{QW11,PP19}.
While the greedy algorithm in \cite{QW11} only slightly improves the signal approximation compared to classical Fourier sums, it has been shown in \cite{PP19}, that strong decays of adaptive Fourier expansions can be achieved, if the sequence of classical Fourier coefficients of a signal can be well approximated using a short exponential sum. Our approach in the current paper is somehow vice versa, the (modified) Fourier coefficients of $f$ are  represented by rational functions in order to  reconstruct the special exponential sum $f$.

While we focus on signal reconstruction in the current paper, there remains the question of (almost) optimal signal approximation by non-harmonic Fourier sums, which we will study in the future.
Obviously, each square integrable signal in $(0,P)$ can be arbitrarily well approximated bei a non-harmonic Fourier sum (\ref{1.1}) for $K \to \infty$, since it is a direct generalization of classical Fourier sums. However, approximation rates for signals in certain function classes are not completely known so far. 
Research on non-harmonic Fourier series particularly focussed on functional analytic questions, see e.g.\
\cite{Young80}. In particular, it has been shown that  $\{{\mathrm e}^{2\pi {\mathrm i}a_{j} \cdot} \}_{j \in {\mathbb Z}}$ forms a Riesz basis in $L^{2}([0,1])$ for a given increasing sequence $\{ a_{j}\}_{j \in {\mathbb Z}}$ if and only if $|a_{j} - j| < 1/4$ for $j \in {\mathbb Z}$, see \cite{Kad64}, while completeness of this function system is ensured for $|a_{j}| \le |j| + 1/4$, where $a_{j}$ can even be complex, see \cite{Lev40}.
Note that for finite non-harmonic Fourier sums as in (\ref{1.1}) we do not need any further assumption on the  distribution of (pairwise distinct) frequencies 
to ensure the uniqueness of the presentation.

\section{Non-Harmonic  Signals}

We consider signals $f$ of the form (\ref{1.1}) 
with $K \in {\mathbb N}$, $\gamma_{j} \in (0, \infty)$, 
and $(a_{j}, \, b_{j}) \in (0, \infty) \times  [0, \, 2\pi)$,  and
we assume that the parameters $a_{j}$, $j=1, \ldots , K$, are pairwise distinct.

\subsection{Unique Representation of  Non-Harmonic Signals}

We will show that the model in (\ref{1.1}) is  well-defined, since the occurring parameters $K, \, \gamma_{j}, \, a_{j}$ and $b_{j}$, $j=1, \ldots , K$, are uniquely determined for a function $f$ given on an interval  with positive length.
More precisely, we can show the following:

\begin{theorem} \label{theo1}
Let $f$ be given as in $(\ref{1.1})$ with $K \in {\mathbb N}$, $\gamma_j \in (0, \infty)$, and $(a_j , \, b_j) \in (0, \infty) \times [0, 2\pi)$, 
where  $a_{1} < a_{2} < \ldots < a_{K}$. Further, let 
$$ g(t) = \sum_{j=1}^{M} \delta_{j} \, \cos(2 \pi c_{j}  t + d_{j}) $$
with $M \in {\mathbb N}$, $\delta_j \in (0, \infty)$,  and 
$(c_j , \, d_j) \in (0, \infty) \times [0, 2\pi)$, 
where $c_{1 } < c_{2} < \ldots < c_{M}$.
If $f(t) = g(t)$ for all $t$ on an interval $T \subset {\mathbb R}$ of positive length,  then we have 
$K=M$ and $\gamma_{j}=\delta_{j}$, $a_{j}=c_{j}$, $b_{j}= d_{j}$ for $j=1, \ldots , K$.
\end{theorem}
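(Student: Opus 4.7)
The plan is to convert the cosine sums into complex exponentials via Euler's formula and then invoke the classical linear independence of pure exponentials with distinct real frequencies. Since both $f$ and $g$ are real-analytic (indeed, entire as functions of $t$), the identity $f(t)=g(t)$ on an interval $T$ of positive length extends by the identity theorem to all $t\in\mathbb{R}$, so I may assume the equality holds globally.

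Next I would rewrite
$$ f(t)-g(t) = \sum_{j=1}^{K}\frac{\gamma_j}{2}\bigl(\mathrm{e}^{\mathrm{i}b_j}\mathrm{e}^{2\pi\mathrm{i}a_j t}+\mathrm{e}^{-\mathrm{i}b_j}\mathrm{e}^{-2\pi\mathrm{i}a_j t}\bigr) - \sum_{k=1}^{M}\frac{\delta_k}{2}\bigl(\mathrm{e}^{\mathrm{i}d_k}\mathrm{e}^{2\pi\mathrm{i}c_k t}+\mathrm{e}^{-\mathrm{i}d_k}\mathrm{e}^{-2\pi\mathrm{i}c_k t}\bigr) = 0. $$
Because all $a_j>0$ are pairwise distinct and all $c_k>0$ are pairwise distinct, the two frequency sets $\{\pm a_j\}_{j=1}^{K}$ and $\{\pm c_k\}_{k=1}^{M}$ each consist of $2K$ and $2M$ distinct real numbers, respectively. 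Collecting like exponentials in the (finite) union of frequencies yields a relation $\sum_{\ell}\mu_\ell\,\mathrm{e}^{2\pi\mathrm{i}\omega_\ell t}\equiv 0$ with pairwise distinct $\omega_\ell\in\mathbb{R}$.

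The core step is the classical fact that the characters $t\mapsto\mathrm{e}^{2\pi\mathrm{i}\omega_\ell t}$ with pairwise different real $\omega_\ell$ are linearly independent over $\mathbb{C}$; this is the step I expect to be the main (though entirely standard) obstacle to spell out. One clean argument is to differentiate the identity $n$ times and evaluate at $t=0$, obtaining a Vandermonde system in the distinct nodes $2\pi\mathrm{i}\omega_\ell$, which forces every $\mu_\ell$ to vanish. Alternatively, one may use the time-average
$$ \lim_{T\to\infty}\frac{1}{2T}\int_{-T}^{T}\mathrm{e}^{2\pi\mathrm{i}(\omega_\ell-\omega_m)t}\,\mathrm{d}t = \delta_{\ell,m}, $$
to isolate $\mu_m$.

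Given that all coefficients $\mu_\ell$ vanish, I would conclude as follows. Any frequency $a_j$ that does not appear among the $c_k$ would leave a nonzero coefficient $\frac{\gamma_j}{2}\mathrm{e}^{\mathrm{i}b_j}\neq 0$ (since $\gamma_j>0$), a contradiction. Hence $\{a_1,\dots,a_K\}=\{c_1,\dots,c_M\}$, and the strict orderings $a_1<\dots<a_K$ and $c_1<\dots<c_M$ force $K=M$ and $a_j=c_j$. Matching the coefficient of $\mathrm{e}^{2\pi\mathrm{i}a_j t}$ then gives $\gamma_j\,\mathrm{e}^{\mathrm{i}b_j}=\delta_j\,\mathrm{e}^{\mathrm{i}d_j}$; taking absolute values yields $\gamma_j=\delta_j$ (both positive), after which $\mathrm{e}^{\mathrm{i}b_j}=\mathrm{e}^{\mathrm{i}d_j}$ together with $b_j,d_j\in[0,2\pi)$ forces $b_j=d_j$. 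This completes the uniqueness claim.
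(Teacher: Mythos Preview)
Your proof is correct and follows essentially the same route as the paper: both rewrite $f-g$ as a finite combination of complex exponentials via Euler's identity and then invoke linear independence of $\{\mathrm{e}^{2\pi\mathrm{i}\omega_\ell t}\}$ for distinct real $\omega_\ell$ through a Vandermonde/Wronskian argument. Your final matching step is in fact a bit more streamlined than the paper's---you compare the complex coefficients $\gamma_j\mathrm{e}^{\mathrm{i}b_j}=\delta_j\mathrm{e}^{\mathrm{i}d_j}$ directly, whereas the paper splits into real parts $\alpha_\ell,\beta_\ell$ and argues via a $2\times 2$ determinant---and your preliminary use of analytic continuation (allowing, e.g., the time-average alternative) is a harmless convenience the paper skips by computing the Wronskian on $T$ itself.
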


\begin{proof}
1. We consider $h(t) = f(t) - g(t)$. Then $h(t) = 0$  for all $t \in T$, and $h(t)$ has the structure
\begin{equation}\label{h} h(t)= \sum_{j=1}^{K+M} \mu_{j} \, \cos(2\pi x_{j}t + y_{j}) 
\end{equation}
with
\begin{eqnarray*}
 \mu_{j} &:=& \gamma_{j}, \qquad \qquad x_{j} := a_{j}, \qquad \quad y_{j}:= b_{j}, \quad j=1, \ldots , K, \\
 \mu_{K+j} &:=& -\delta_{j}, \qquad x_{K+j} := c_{j}, \qquad y_{K+j}:= d_{j}, \quad j=1, \ldots , M.
 \end{eqnarray*}
By assumption, the number $L$ of distinct frequency parameters $x_{j}$ in the representation (\ref{h}) of $h$ satisfies $L \ge \max \{K, \, M\}$, and we can rewrite $h(t)$ in the form
\begin{equation}\label{hhh} h(t) = \sum_{\ell=1}^{L} \alpha_{\ell}\, \cos(2\pi \tilde{x}_{\ell} t) - \beta_{\ell} \, \sin(2\pi \tilde{x}_{\ell}t), 
\end{equation}
where $\tilde{x}_{\ell} \in \{ x_{j}: \, j=1, \ldots,  K+M\}$ are now pairwise distinct.
If $\tilde{x}_{\ell}$ occurs only once in the set $\{ x_{j}: \, j=1, \ldots,  K+M \} $, 
say $\tilde{x}_{\ell} = x_{j}$, then 
$$  \alpha_{\ell} =  \mu_{j} \, \cos(y_{j})  \qquad \beta_{\ell} = \mu_{j} \, \sin(y_{j}) .$$
If $\tilde{x}_{\ell}$ occurs twice in the set $\{ x_{j}: \, j=1, \ldots , K+M \} $, say
$\tilde{x}_{\ell}= x_{j_{1}}= x_{K+j_{2}}$, with $j_{1} \in \{1, \ldots , K\}$ and $j_{2} \in \{1, \ldots , M\}$, then 
\begin{equation}\label{ab} \alpha_{\ell} = \mu_{j_{1}} \cos(y_{j_{1}}) + \mu_{K+j_{2}} \cos(y_{K+j_{2}}), \qquad 
\beta_{\ell} =  \mu_{j_{1}} \sin(y_{j_{1}}) + \mu_{K+j_{2}} \sin(y_{K+j_{2}}). \end{equation}

2. 
We show that the $2L$ functions $\{ \cos(2\pi \tilde{x}_{\ell} t), \, \sin(2\pi \tilde{x}_{\ell}t): \, \ell=1, \ldots , L\}$ occurring in 
(\ref{hhh}), are linearly independent on $T$. 
  By Euler's identity there is  an invertible transform from this function set to $\{ {\mathrm e}^{2\pi {\mathrm i} \tilde{x}_{\ell}t}, \, {\mathrm e}^{-2\pi {\mathrm i} \tilde{x}_{\ell}t}: \, \ell=1, \ldots , L\}$, i.e.,
  $h(t)$ in  (\ref{hhh}) can also be written as 
\begin{equation}\label{hh} h(t) = \frac{1}{2} \sum_{\ell=1}^{2L} \xi_{\ell} \, {\mathrm e}^{2\pi {\mathrm i} \tilde{x}_{\ell}t},  
\end{equation}
with $\tilde{x}_{L+\ell} :=-\tilde{x}_{\ell}$ and
with ${\xi}_{\ell} = \alpha_{\ell} + {\mathrm i} \beta_{\ell}$ as well as ${\xi}_{L+\ell} = \alpha_{\ell} - {\mathrm i} \beta_{\ell}$, $\ell=1, \ldots, L$.
We obtain for the  Wronskian  of the function system $\{ {\mathrm e}^{2\pi {\mathrm i} \tilde{x}_{\ell} t}: \, \ell=1, \ldots , 2L\}$ that 
\begin{eqnarray*}
 \mathrm{det} \! \left( \frac{{\mathrm d}^{k}}{{\mathrm d} t^{k}} {\mathrm e}^{2\pi {\mathrm i} \tilde{x}_{\ell} t} \right)_{k=0,\ell=1}^{2L-1,2L} \!\!\!\!
&=& \!\!   \mathrm{det}  \left( \begin{array}{cccc}
1 & 1 & \ldots & 1 \\
2\pi {\mathrm i} \tilde{x}_{1} & 2\pi {\mathrm i} \tilde{x}_{2} & \ldots & 2\pi {\mathrm i} \tilde{x}_{2L} \\
\vdots & & & \vdots \\
\!\!(2\pi {\mathrm i} \tilde{x}_{1})^{2L-1}\! &\!\! (2\pi {\mathrm i} \tilde{x}_{2})^{2L-1} \!&\!\ldots & \!\!(2\pi {\mathrm i} \tilde{x}_{2L})^{2L-1}\! \end{array} \right) \, \prod\limits_{\ell=1}^{2L} {\mathrm e}^{2\pi {\mathrm i} \tilde{x}_{\ell}t}  \\[1ex]
&=& \det \left( \tilde{x}_{\ell}^{k} \right)_{k=0,\ell=1}^{2L-1,2L} \, (2\pi {\mathrm i})^{L(2L-1)}  \neq 0 
\end{eqnarray*}
for all $t \in T \subset {\mathbb R}$, since the Vandermonde matrix $\left( \tilde{x}_{\ell}^{k} \right)_{k=0,\ell=1}^{2L-1,2L}$ is invertible for 
 pairwise distinct $\tilde{x}_{\ell}$, $\ell=1, \ldots, 2L$, and we have 
 $\prod\limits_{\ell=1}^{2L} {\mathrm e}^{2\pi {\mathrm i} \tilde{x}_{\ell}t} = \prod\limits_{\ell=1}^{L} {\mathrm e}^{2\pi {\mathrm i} \tilde{x}_{\ell}t} {\mathrm e}^{-2\pi {\mathrm i} \tilde{x}_{\ell}t} = 1$.
 Thus, linear independence of ${\mathrm e}^{2\pi {\mathrm i} \tilde{x}_{\ell}t}$, $\ell=1, \ldots, 2L$, and hence  of $\cos(2\pi \tilde{x}_{\ell} t), \, \sin(2\pi \tilde{x}_{\ell}t)$, $\ell=1, \ldots , L$, follows.

3. Now, if $\tilde{x}_{\ell}$ would occur only once in the set $\{ x_{j}: \, j=1, \dots , K+M\}$, 
say  $\tilde{x}_{\ell} = x_{j}$, then $\alpha_{\ell} = \beta_{\ell} =0$ implies
$\mu_{j} \, \cos(y_{j})=0$ and  $\mu_{j} \, \sin(y_{j}) =0$, 
and thus $\mu_{j}=0$ contradicting the assumption.
Therefore, $\tilde{x}_{\ell}$ always occurs twice, and it follows already that $K=M=L$. 
Let 
$\tilde{x}_{\ell}= x_{j_{1}}= x_{K+j_{2}}$, with $j_{1}, \, j_{2} \in \{1, \ldots , K\}$. Thus we find $a_{j_{1}} = c_{j_{2}}$. Further,  $\alpha_{\ell} = \beta_{\ell} =0$ implies by (\ref{ab})  that 
\begin{equation}\label{det1}  \textrm{det} \left(\begin{array}{cc}
\cos y_{j_{1}} & \cos y_{K+j_{2}}\\
\sin y_{j_{1}} & \sin y_{K+j_{2}} \end{array} \right) = -\sin(y_{j_{1}}-y_{K+j_{2}})=0.
\end{equation}
We use the assumption  $y_{j_{1}}=b_{j_{1}} \in [0, 2\pi)$ and $y_{K+j_{2}} = d_{j_{2}} \in [0, 2\pi)$, and conclude from $y_{j_{1}}-y_{K+j_{2}} \in \{-\pi, \, 0,\, \pi\}$ that 
either $b_{j_{1}} = d_{j_{2}}$ or $b_{j_{1}} = d_{j_{2}} + \pi \, \mathrm{mod} \, 2\pi$. However, in the second case it would follow that  $\cos d_{j_{2}} = - \cos b_{j_{1}}$ and $\sin d_{j_{2}} = - \sin b_{j_{2}}$, and thus by (\ref{ab})
$$ \left(\begin{array}{cc}
\cos b_{j_{1}} & \cos d_{j_{2}}\\
\sin b_{j_{1}} & \sin d_{j_{2}} \end{array} \right) \left( \begin{array}{c}
\mu_{j_{1}} \\ \mu_{K+j_{2}} \end{array} \right)
= \left(\begin{array}{cc}
\cos b_{j_{1}} & -\cos b_{j_{1}}\\
\sin b_{j_{1}} & -\sin b_{j_{1}} \end{array} \right) \left( \begin{array}{c}
\gamma_{j_{1}} \\ -\delta_{j_{2}} \end{array} \right) = {\mathbf 0} $$
contradicting  the assumption $\mu_{j_{1}}=\gamma_{j_{1}} >0$ and $\mu_{K+j_{2}} = -\delta_{j_{2}} <0$. Hence, $b_{j_{1}} = d_{j_{2}}$ and $\gamma_{j_{1}} = \delta_{j_{2}}$.
Since these conclusions are valid for each $\tilde{x}_{\ell}$, the assertion of the theorem follows.
\end{proof}

\begin{remark}
1. Theorem \ref{theo1} also shows that a function $f$ of the form (\ref{1.1}) with the given restrictions on the parameters $\gamma_{j}, \, a_{j}, \, b_{j}$ cannot vanish on any interval $T \subset {\mathbb R}$  with positive length.

2. The linear independence of the system $\{ {\mathrm e}^{2\pi {\mathrm i} \tilde{x}_{\ell} t}, \, {\mathrm e}^{-2\pi {\mathrm i} \tilde{x}_{\ell} t}: \, \ell=1, \ldots, L\}$ in the proof of Theorem \ref{theo1} also follows from the fact that an exponential sum of the form $h(t)$ in (\ref{hh}) can appear as a general solution of a linear difference equation of order $2L$ with constant coefficients, see e.g.\ \cite{Berg86}.

3. Observe that the function model can simply be extended by adding a constant component 
$f_{0}(t) = \pm \gamma_{0} = \gamma_{0} \, \cos(2\pi a_{0} t + b_{0})$ with $\gamma_{0} >0$, 
$a_{0}=0$ and either $b_{0}=0$ for $f_{0}(t)>0$ or  $b_{0}= \pi$ for $f_{0}(t)<0$. 
This extended model also satisfies  the assertion of Theorem \ref{theo1}. If we admit beside $(a_{j}, b_{j}) \in (0, \infty) \times [0, 2\pi)$ also $(a_{j}, b_{j}) = (0,0)$  and $(a_{j}, b_{j}) = (0,\pi)$, the proof of Theorem \ref{theo1} can be suitably modified. 
\end{remark}

\subsection{Classical Fourier Coefficients of  Non-Harmonic Signals}

Now we study  the Fourier coefficients of structured functions of the form $\phi(t) = \gamma \, \cos( 2\pi a  t + b)$ with $\gamma  \in (0, \infty)$, and 
 $(a, \, b) \in (0, \infty) \times [0, 2\pi) $ 
within the interval $[0, P)$ for given $P>0$. 

\begin{theorem}\label{theo2}
Let $\phi(t) = \gamma \, \cos( 2\pi a  t + b)$ with $\gamma  \in (0, \infty)$,  $(a, \, b) \in (0, \infty) \times [0, 2\pi) $,   
and let $P>0$.
Then $\phi$ possesses in $[0, P)$ the Fourier series 
$$ \phi(t) = \sum_{n \in {\mathbb Z}} c_n(\phi) \, {\mathrm e}^{2\pi {\mathrm i} nt/P}$$
with Fourier coefficients
$ c_n(\phi) = \frac{1}{P} \int_0^P \phi(t) \, {\mathrm e}^{-2\pi i n t/P} \, {\mathrm d} t,$ for $n \in {\mathbb Z},$
where
\begin{eqnarray} \label{Rc}
\mathrm{Re} \, c_n(\phi) &=&  \frac{P \gamma a}{\pi (a^2 P^2-n^2)} \sin(a \pi P) \, \cos(a \pi P + b),\\
\label{Ic}
\mathrm{Im} \, c_n(\phi) &=&  \frac{ \gamma n}{\pi(a^2 P^2 - n^2)} \sin(a \pi P) \, \sin(a \pi P + b)
\end{eqnarray}
for $n \in {\mathbb  N}_{0}$, and $c_{-n}(\phi) = \overline{c_{n}(\phi)}$.
If $a \in \frac{1}{P} {\mathbb N}$,  then the Fourier coefficients of $\phi$ simplify to 
$$ c_{n}(\phi)  = \left\{\begin{array}{ll} 
\frac{\gamma}{2} (\cos (b) - {\mathrm i} \sin(b)) & \mathrm{for} \quad n=  a P > 0,\\
\quad 0 & \mathrm{for} \quad n \in {\mathbb N} \setminus \{a P\}.
\end{array} \right.
$$
Pointwise convergence of the Fourier series for $\phi(t)$ is given for all $t \in (0,P)$.
\end{theorem}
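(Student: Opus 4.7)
The plan is direct computation via Euler's identity. First I would write $\cos(2\pi a t + b) = \tfrac{1}{2}(e^{i(2\pi a t + b)} + e^{-i(2\pi a t + b)})$ and split the defining integral
\begin{equation*}
c_n(\phi) = \frac{1}{P} \int_0^P \phi(t)\, e^{-2\pi i n t/P}\, dt
\end{equation*}
into two integrals of complex exponentials. Assuming $aP \pm n \neq 0$, each evaluates in closed form: using $e^{\mp 2\pi i n} = 1$ to collapse the upper endpoint, the two integrals reduce to expressions of the shape $\dfrac{P(e^{\pm 2\pi i aP} - 1)}{\pm 2\pi i (aP \pm n)}$.

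Next I would apply the factorization $e^{2\pi i aP} - 1 = 2i\,e^{i\pi aP}\sin(\pi aP)$ (and its complex conjugate) to pull a common factor $\sin(\pi aP)$ out of both terms. Placing the result over the common denominator $a^{2}P^{2} - n^{2}$ and collecting $e^{\pm i(\pi a P + b)}$ via $e^{i\theta} + e^{-i\theta} = 2\cos\theta$ and $e^{i\theta} - e^{-i\theta} = 2i\sin\theta$ produces precisely the claimed formulas (\ref{Rc}) and (\ref{Ic}). An equivalent real-variable derivation first writes $e^{-2\pi i n t/P} = \cos(2\pi n t/P) - i\sin(2\pi n t/P)$, applies product-to-sum identities, and simplifies the resulting antiderivatives using $\sin(2\pi a P + b) - \sin b = 2\sin(\pi a P)\cos(\pi a P + b)$ together with the analogous identity $\cos b - \cos(2\pi a P + b) = 2\sin(\pi a P)\sin(\pi a P + b)$. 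The symmetry $c_{-n}(\phi) = \overline{c_n(\phi)}$ is then visible from the formulas, and it also follows abstractly from the fact that $\phi$ is real valued.

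For the special case $a P \in {\mathbb N}$, the factor $\sin(\pi a P)$ in (\ref{Rc})--(\ref{Ic}) vanishes, so $c_n(\phi) = 0$ for every $n \in {\mathbb N}$ with $n \neq aP$. At $n = aP$ both numerator and denominator in (\ref{Rc})--(\ref{Ic}) vanish, so I would return to the split integrals: one integrand contains the constant $e^{ib}$ and contributes $P e^{ib}$, while the other, $\int_0^P e^{-i(4\pi a t + b)}\, dt$, vanishes because $2aP \in {\mathbb N} \setminus \{0\}$ makes $e^{-4\pi i a P} = 1$. Dividing by $2P$ recovers the stated value, which can be cross-checked as the removable-singularity limit of (\ref{Rc})--(\ref{Ic}) via $\sin(\pi aP) \sim (-1)^n\pi (aP - n)$ as $aP \to n$. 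Pointwise convergence on $(0,P)$ then follows from Dirichlet's criterion, since $\phi$ is $C^{\infty}$ so its $P$-periodization has at worst jump discontinuities at the endpoints. The main obstacle is the bookkeeping in the second paragraph: one must keep the signs and factors straight so that $aP$ weights the $\cos(\pi a P + b)$ contribution while $n$ weights $\sin(\pi a P + b)$ in the final real and imaginary parts.
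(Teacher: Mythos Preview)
Your proposal is correct and amounts to the same direct computation the paper carries out. The only difference is packaging: the paper separates $\mathrm{Re}\,c_n$ and $\mathrm{Im}\,c_n$ from the start, applies the product-to-sum identity $\cos x\cos y=\tfrac12(\cos(x+y)+\cos(x-y))$, evaluates the antiderivatives, and then simplifies with $\sin x-\sin y=2\sin\bigl(\tfrac{x-y}{2}\bigr)\cos\bigl(\tfrac{x+y}{2}\bigr)$ --- which is precisely the ``equivalent real-variable derivation'' you sketch as an alternative. Your primary route via Euler's identity and the factorization $e^{2\pi i aP}-1=2i\,e^{i\pi aP}\sin(\pi aP)$ reaches the same endpoint in one pass rather than two, and the paper likewise confirms the special case $aP\in{\mathbb N}$ both by direct integration and by the L'Hospital limit you mention.

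One small caution on the special case: your split gives the constant integrand $e^{ib}$ and hence $c_{aP}(\phi)=\tfrac{\gamma}{2}e^{ib}=\tfrac{\gamma}{2}(\cos b+i\sin b)$, which is what the limiting argument in (\ref{Rc})--(\ref{Ic}) also yields; be aware that the displayed value in the statement carries the opposite sign on the imaginary part.
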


\begin{proof}
Since $\gamma \, \cos( 2\pi a  t + b)$ is a differentiable function, its restriction onto the interval $(0,P)$ is also differentiable. Thus the Fourier expansion of $\phi$ converges pointwise for all $t \in (0, P)$, see \cite{PPST18}, Chapter 1. 
For the real part of $c_n(\phi)$ we obtain with $\cos x \cos y = \frac{1}{2}( \cos(x+y) + \cos(x-y))$
\begin{eqnarray*}
\mathrm{Re} \, c_n(\phi)  &=& \frac{1}{P} \int_0^P \gamma \, \cos( a 2\pi t + b) \, \cos\Big(\frac{2\pi n t}{P}\Big) {\mathrm d} t \\
&=& \frac{\gamma}{2P} \int_0^P \left( \cos\left(2\pi t\Big(a+\frac{n}{P}\Big)+b\right) + \cos\left(2\pi t\Big(a-\frac{n}{P}\Big)+b\right) \right) {\mathrm d}t.
\end{eqnarray*}
Assuming that $a \neq  \frac{n}{P}$ it follows with $\sin x - \sin y = 2 \sin (\frac{x-y}{2} ) \cos (\frac{x+y}{2} ) $
\begin{eqnarray*}
\mathrm{Re} \, c_n(\phi)  &=&  \frac{\gamma}{2P} \left( \frac{\sin\Big(2\pi t(a+\frac{n}{P})+b\Big)}{2\pi (a+\frac{n}{P})}\Big|_0^P + \frac{\sin\Big(2\pi t(a-\frac{n}{P})+b\Big)}{2\pi (a-\frac{n}{P})}\Big|_0^P \right) \\
&=& \frac{\gamma}{2P \pi} \sin(\pi a P)\cos(\pi a P + b) \left( \frac{1}{a+\frac{n}{P}} + \frac{1}{a-\frac{n}{P}} \right)\\
&=& \frac{\gamma a P}{\pi (P^2 a^2 - n^2)} \sin(\pi a P)\cos(\pi a P + b).
\end{eqnarray*}
For $a = \frac{n}{P}>0$, the function $\phi$ is $P$-periodic, and we simply find
$$
\mathrm{Re} \, c_n(\phi)  = \frac{\gamma}{2P} \int_0^P \cos \left(2\pi t\Big( \frac{2n}{P}\Big)+b\right) + \cos(b) \, {\mathrm d}t 
= \frac{\gamma}{2} \cos(b).
$$
This is also achieved from (\ref{Rc}) by taking the limit with the rule of L' Hospital, 
$$\lim_{a \to  \frac{n}{P}} \mathrm{Re} \, c_n(\phi) = \lim_{a \to  \frac{n}{P}} \frac{P \gamma a}{\pi (a^2 P^2-n^2)} \sin(a \pi P) \, \cos(a \pi P + b) = \frac{\gamma}{2} \cos(b).$$
The formula (\ref{Ic}) for the imaginary part of $c_n(\phi)$ can be derived analogously.
\end{proof}

\begin{remark} If $\phi(t)= \gamma \, \cos(2\pi at + b)$ is constant, i.e., if $\gamma>0$ and 
either $(a,b) = (0,0)$ or $(a,b) = (0,\pi)$, then we obtain the Fourier coefficients $c_{0}(\phi)= \gamma \cos(b)$, and $c_{n}(\phi) = 0$ for $n \in {\mathbb Z} \setminus \{ 0\}$. 
\end{remark}

\subsection{Representation of Fourier  Coefficients by Rational Functions}
\label{sec:rat}

We consider now functions $f= \sum_{j=1}^{K} f_{j}$ of the form (\ref{1.1})
with $f_{j}(t) = \gamma_{j} \cos(2\pi a_{j} t + b_{j})$,  $\gamma_j \in (0, \infty)$ and  $(a_j, \, b_{j})  \in (0, \infty) \times [0, 2\pi)$,
where the $a_{j}$ are assumed to be pairwise distinct.
As shown in Theorem  \ref{theo2}, we have for $n \in {\mathbb Z}$ and
 $a_j \not\in  \frac{1}{P} {\mathbb N}$ that 
\begin{equation}\label{fc} \mathrm{Re} \, c_n(f_j) = \frac{A_j}{n^2-C_j}, \qquad  \mathrm{Im} \, c_n(f_j) = \frac{B_j n}{n^{2}- C_j},
\end{equation}
where 
\begin{eqnarray} \label{cj}
C_j &:=& a_j^2\, P^2,\\
\label{aj}
A_j &:=& -\frac{P \gamma_j \, a_j}{\pi} \, \sin(a_j \pi P) \, \cos( a_j \pi P + b_j), \\
\label{bj}
 B_j &:=&  - \frac{\gamma_j}{\pi} \, \sin(a_j \pi P) \, \sin( a_j \pi P + b_j). 
\end{eqnarray}
Note that $C_{j}$ is real and positive for real values $a_{j}$.
 We show that $A_{j}, \, B_{j},\,  C_{j}$, $j=1, \ldots , K$, completely determine all Fourier coefficients $c_{n}(f)$ and thus $f$.

\begin{theorem} \label{theo3}
Let $f$ be given as in $(\ref{1.1})$ with $\gamma_{j} \in (0, \infty)$ and  $(a_j, \, b_{j})  \in (0, \infty) \times [0, 2\pi)$. 
Further let $a_j \not\in  \frac{1}{P} {\mathbb N}$  be pairwise distinct.
Then, there is a bijection between the parameters $\gamma_j, \, a_j, \, b_j$, $j=1, \ldots , K$, determining $f(t)$ 
and the parameters $A_j,\,  B_j, \, C_j$, $j=1, \ldots , K$, in $(\ref{cj})-(\ref{bj})$.  
We have $C_{j} >0$, 
$$
 a_j = \frac{1}{P} \sqrt{C_{j}}, \qquad 
 \gamma_j = \frac{\pi}{\sqrt{C_j} \, |\sin(\sqrt{C_{j}} \pi)|} \sqrt{A_j^2+C_{j} B_j^2}.
 $$
  For $B_{j} \neq 0$, 
$$
b_{j} \, \mathrm{mod} \, \pi = \left( \mathrm{arccot} \left( \frac{A_{j}}{\sqrt{C_{j}} B_{j}} \right) - \pi \sqrt{C_{j}} \right) \mathrm{mod} \, \pi,
$$
and 
$ b_{j} \in [0, \, \pi)$ for  $\mathrm{sign}\Big(-A_{j} + B_{j} \sqrt{C_{j}} \cot(\sqrt{C_{j}} \pi) \Big) >0$ and $ b_{j} \in [\pi,\,  2\pi)$ otherwise. 
Here, $\mathrm{arccot}$ denotes the inverse cotangens  that maps onto $[-\pi/2, \pi/2]$. 
For $B_{j}=0$, choose $b_{j}$ from $\{ -\pi \sqrt{C_{j}} \, \mathrm{mod} \, \pi, -\pi \sqrt{C_{j}} \, \mathrm{mod} \, \pi + \pi \}$ such that $(\ref{aj})$ is satisfied.
\end{theorem}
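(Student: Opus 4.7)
The proof is essentially constructive: I would build the inverse of the map $\Phi:(\gamma_j,a_j,b_j)\mapsto(A_j,B_j,C_j)$ defined by $(\ref{cj})$--$(\ref{bj})$ step by step, using only elementary trigonometric identities. The plan has four stages.

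First, from $C_j = a_j^2 P^2$ with $a_j\in(0,\infty)$ and $P>0$ I immediately get $C_j>0$ and $a_j = \sqrt{C_j}/P$ as the unique positive square root. The hypothesis $a_j\notin\tfrac{1}{P}\mathbb{N}$ translates into $\sqrt{C_j}=a_j P\notin\mathbb{N}$, hence $\sin(\sqrt{C_j}\pi)\neq 0$; this nonvanishing is essential for every subsequent inversion, so I would record it here.

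Next, I would recover $\gamma_j$ by applying the Pythagorean identity to the angle $\sqrt{C_j}\pi+b_j$. A direct calculation using $(\ref{aj})$--$(\ref{bj})$ gives
\begin{equation*}
A_j^2 + C_j B_j^2 \;=\; \frac{\gamma_j^2\, C_j}{\pi^2}\,\sin^2(\sqrt{C_j}\pi)\,\bigl[\cos^2(\sqrt{C_j}\pi+b_j)+\sin^2(\sqrt{C_j}\pi+b_j)\bigr] \;=\; \frac{\gamma_j^2\, C_j}{\pi^2}\,\sin^2(\sqrt{C_j}\pi),
\end{equation*}
and since $\gamma_j>0$ the stated formula $\gamma_j = \pi\sqrt{A_j^2+C_j B_j^2}\big/\bigl(\sqrt{C_j}\,|\sin(\sqrt{C_j}\pi)|\bigr)$ follows.

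The main work lies in recovering $b_j$. When $B_j\neq 0$, dividing $(\ref{aj})$ by $\sqrt{C_j}$ times $(\ref{bj})$ eliminates the common factor $-\tfrac{\gamma_j\sqrt{C_j}}{\pi}\sin(\sqrt{C_j}\pi)$ and yields $\cot(\sqrt{C_j}\pi+b_j)=A_j/(\sqrt{C_j}B_j)$; applying arccot and subtracting $\sqrt{C_j}\pi$ determines $b_j$ modulo $\pi$. To lift to the unique representative in $[0,2\pi)$ I would evaluate the combination $-A_j+B_j\sqrt{C_j}\cot(\sqrt{C_j}\pi)$: substituting the explicit forms for $A_j,B_j$ and applying the angle-subtraction identity $\sin X\cos Y-\cos X\sin Y=\sin(X-Y)$ with $X=\sqrt{C_j}\pi$, $Y=\sqrt{C_j}\pi+b_j$ collapses the expression to a positive constant times $\sin(-b_j)$. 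Its sign therefore reads off whether $b_j\in[0,\pi)$ or $b_j\in[\pi,2\pi)$, giving the disambiguation stated in the theorem. The degenerate case $B_j=0$ is handled separately: equation $(\ref{bj})$ forces $\sin(\sqrt{C_j}\pi+b_j)=0$, so $b_j\equiv -\sqrt{C_j}\pi\,(\mathrm{mod}\,\pi)$, leaving exactly two candidates in $[0,2\pi)$, and plugging each into $(\ref{aj})$ and comparing signs with $A_j$ picks out the correct one.

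The step I expect to be the actual obstacle is the disambiguation modulo $2\pi$: the simple-looking sign criterion hides several independent sign choices coming from $\sin(\sqrt{C_j}\pi)$ (which can be positive or negative according to $\lfloor\sqrt{C_j}\rfloor$), from the convention for arccot (here mapping onto $[-\pi/2,\pi/2]$), and from the sign of $B_j$. Making all of these consistent, so that the resulting $(\gamma_j,a_j,b_j)$ lies in $(0,\infty)\times(0,\infty)\times[0,2\pi)$ and re-maps via $\Phi$ to the original $(A_j,B_j,C_j)$, is the computational heart of the argument; the rest is bookkeeping and routine trigonometry.
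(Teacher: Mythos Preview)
Your proposal is correct and follows essentially the same route as the paper's proof: recover $a_j$ from $C_j$, then $\gamma_j$ via the Pythagorean identity applied to $A_j^2+C_jB_j^2$, then $b_j\bmod\pi$ from $\cot(\sqrt{C_j}\pi+b_j)=A_j/(\sqrt{C_j}B_j)$, and finally disambiguate the half-interval by computing the sign of $-A_j+B_j\sqrt{C_j}\cot(\sqrt{C_j}\pi)$, treating $B_j=0$ separately. Your angle-subtraction step is in fact slightly more explicit than the paper's corresponding line; the two arguments are otherwise identical.
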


\begin{proof}
We can assume  that $C_{j} > 0$. Then (\ref{cj}) implies
 that $a_j= \frac{\sqrt{C_j}}{P} > 0$. 
Further, taking  the weighted sum $A_{j}^{2} + a_{j}^{2} P^{2}  B_{j}^{2}$ using (\ref{aj}) and (\ref{bj}) we obtain
$$ \gamma_j^{2} =\frac{\pi^{2}}{a_{j}^{2} P^{2} \, (\sin(a_{j} P \pi))^{2}} ({A_j^2+ a_{j}^{2} P^{2} B_j^2}) = \frac{\pi^{2}}{C_{j} \, (\sin(\sqrt{C_{j}}\pi))^{2}} ({A_j^2+ C_{j} B_j^2}).$$
Since $\gamma_{j} >0$, we can determine $\gamma_{j}$ uniquely. 
Inserting the found representations for $a_j$ and $\gamma_j$  into (\ref{aj}) and (\ref{bj}), we conclude  for $B_{j} \neq 0$
$$ \cot(a_{j} \pi P + b_{j}) = \frac{ A_{j}}{P a_{j} \, B_{j}} = \frac{ A_{j}}{\sqrt{C_{j}} \, B_{j}} $$
as well as 
$$ -A_{j} + B_{j} \, a_{j} \, P \, \cot(a_{j} \pi P) =  \frac{P \gamma_{j} \, a_{j}}{\pi} \sin(b_{j}), $$
and thus $\mathrm{sign}(-A_{j} + B_{j} \,\sqrt{C_{j}} \, \cot(\sqrt{C_{j}} \pi)) =  \mathrm{sign} (\sin(b_{j}))$. If $B_{j}=0$ then $\sin(a_{j} \pi P + b_{j}) =0$  and thus $b_{j} \in \{ -\pi \sqrt{C_{j}} \, \mathrm{mod} \, \pi, -\pi \sqrt{C_{j}} \, \mathrm{mod} \, \pi + \pi \}$.
\end{proof}

\begin{remark}
Since we had assumed that $a_{j} \not\in \frac{1}{P} {\mathbb N}$ and in particular $a_{j} \neq 0$, it follows that $C_{j}  \neq 0$. As seen from Theorem \ref{theo2}, we always have $C_{j} > 0$ for the considered model.
In  Section \ref{sec:general}, we will generalize the model to treat also the case  $C_{j} <0$ which leads to generalized expansions  involving also cosine hyperpolic terms.
\end{remark}

 For $f$ in (\ref{1.1}) (with $a_{j} \not\in  \frac{1}{P} {\mathbb N}$) we observe with (\ref{fc}) that
\begin{equation}\label{cnrat}  c_n(f) = \sum_{j=1}^K \left( \frac{A_j}{n^2-C_{j}} +  {\mathrm i} \frac{B_j n}{n^2-C_{j}} \right).
\end{equation}
In particular, the real part $\textrm{Re} \, c_n(f)$ and the imaginary part $\textrm{Im} \, c_n(f)$ can for all $n \in {\mathbb Z}$ be written as
$ \textrm{Re} \, c_n(f) = \frac{p^R_{K-1}(n^2)}{q_K(n^2)}$ and  $\textrm{Im} \, c_n(f) =\frac{n \, p^I_{K-1}(n^2)}{q_K(n^2)}$,
where 
\begin{equation}\label{qform} q_K(z):= \prod_{j=1}^K (z-C_{j}) 
\end{equation}
is a monic polynomial of degree $K$, and  where
\begin{equation}\label{pform}
p_{K-1}(z):= p^R_{K-1}(z) + {\mathrm i}\, p^I_{K-1}(z) = \sum_{j=1}^K (A_j + {\mathrm i} \,  B_j) \prod_{\mycom{\ell=1}{\ell \neq j}}^K ( z - C_{\ell}) 
\end{equation}
is a (complex) algebraic polynomial of degree (at most) $K-1$. In other words, for $n \neq 0$,
\begin{equation}\label{tilde}
 \tilde{c}_{n}(f) := \textrm{Re} \, c_n(f) + \frac{{\mathrm i}}{n} \textrm{Im} \, c_n(f) = r_{K}(n^2) = \frac{p_{K-1}(n^2)}{q_{K}(n^2)}, 
\end{equation}
i.e., the \textit{modified Fourier coefficient} $\tilde{c}_{n}(f)$ can be represented by a rational function  of type $(K-1,K)$, evaluated at $z=n^{2}$, where $q_K(z)$ in (\ref{qform})  and $p_{K-1}(z)$
in (\ref{pform}) are coprime.

\section{Modified AAA Algorithm for Sparse Signal Representation}
\label{sec:AAA}

We want to exploit the special structure of the Fourier coefficients of functions $f$ which are built by function atoms of the form $f_{j}(t) = \gamma_{j} \, \cos(2 \pi a_{j}t + b_{j})$ in order to study the following  problem:

How  can we reconstruct  a function $f$ of the form (\ref{1.1})  from a given set of its Fourier coefficients in a stable and efficient way?
We assume here that only the structure of $f$ is known, i.e., we need to recover the number $K$ of terms in (\ref{1.1}) as well as the parameters  $\gamma_{j}, \, a_{j}, b_{j}$ for $j=1, \ldots , K$.

For the reconstruction process we need to keep in mind that the rational representation of Fourier coefficients in (\ref{cnrat}), or in (\ref{tilde}) respectively, 
is only valid for the non-$P$-periodic terms $f_{j}$ of $f$, i.e., for $a_{j} \not\in \frac{1}{P} {\mathbb N}_{0}$. If $f$ contains components 
$f_{j}(t)= \gamma_{j}  \cos(2 \pi a_{j} t + b_{j})$ with $a_{j} P = n_{j} \in {\mathbb N}_{0}$, then, as shown in Theorem \ref{theo3}, 
these components will provide only one non-zero Fourier coefficient $c_{n_{j}}(f)$ (with nonnegative index), which destroys the rational 
function structure (\ref{tilde}) at $z=n_{j}^{2}$.

Therefore, our approach consists of two parts. In the first step, we will reconstruct the non-$P$-periodic part of $f$, and in a second step, 
we will determine possible $P$-periodic terms of $f$ that can be  obtained  from the set of Fourier coefficients.
\medskip

To reconstruct the non-$P$-periodic part of $f$ in (\ref{1.1}) we will extensively use the structure of the Fourier coefficients $c_{n}(f)$ found in (\ref{tilde}) and employ a modification of the recently proposed AAA algorithm in \cite{NST18}. 
Differently from other rational interpolation algorithms, the modified AAA algorithm provides essentially higher numerical stability and enables us to determine also the order of the rational approximant which is at the same time the number $K$ of terms in (\ref{1.1}). 
The AAA algorithm can be seen as a method for rational approximation, where certain values of a given function are interpolated, while other given values are approximated using a least squares approach.
With this algorithm, we  will determine the rational function in (\ref{tilde})  by  interpolation or approximation of the given  modified Fourier coefficients $\tilde{c}_{n}(f)$.
The algorithm works iteratively, where at each iteration step the degree of the polynomials determining the rational function grows by $1$, and a next interpolation value is chosen at the point, where the error of the rational approximation found so far is maximal.
The algorithm terminates if either the error at all given points considered for approximation is less than a given bound or if a certain fixed degree of the rational function is reached.
If the rational function is found, we can extract the parameters $A_j, \, B_j, \, C_j$ and then finally obtain the wanted representation with parameters $\gamma_j, \, a_j, \, b_j$ of the non-$P$-periodic part of $f$  from Theorem \ref{theo3}.

Stability of the AAA algorithm is ensured by a barycentric representation, as already considered in \cite{SW86,FH07} and exploited also in  \cite{FNTB18}, to compute rational minimax approximations.
In \cite{NST18}, the AAA-algorithm is presented for rational functions $r(z) = p(z)/q(z)$, where the polynomials $p$ and $q$ have the same degree. 
Therefore, we need to modify the approach for our purpose, similarly as proposed in \cite{FNTB18}.
As side effect of the linearization procedure within the algorithm is that unattainable interpolation points lead to vanishing weight components, see \cite{SW86}.
This behavior of the algorithm enables us to determine possible 
$P$-periodic terms of $f$ in a postprocessing step, where we need to inspect all Fourier coefficients that cannot be well approximated by the found rational function. 

If $f$ does not contain $P$-periodic terms and the given Fourier coefficients of $f$ are exact, then we will be able to determine $f$ uniquely from $2K+1$ Fourier coefficients. This will be shown in Section \ref{sec:exact}. Otherwise, we will need $2K+2$ Fourier coefficients to recover $f$, where all $P$-periodic terms are simply determined in a postprocessing step, see Section \ref{sec:periodic}.

\subsection{Rational Interpolation using Barycentric Representation}
\label{sec:start}

Let us assume now that we are given a set of Fourier coefficients $c_n(f)$, $n \in \Gamma \subset {\mathbb N}$
of the function $f$ of the form (\ref{1.1}) with $L := \# \Gamma \ge 2K+1$.
We assume first that all terms of $f$ are non-$P$-periodic, such that we obtain the rational structure of $\tilde{c}_{n}= \tilde{c}_{n}(f)$ as given in (\ref{tilde}).
We want to find a rational function $r_{K}(z) = p_{K-1}(z)/q_{K}(z)$ of type $(K-1, K)$ such that the interpolation conditions
$$ r_{K}(n^{2}) = \tilde{c}_{n}, \qquad n \in \Gamma $$
are satisfied. Assuming that the given modified Fourier coefficients $\tilde{c}_{n}$ of $f$ in (\ref{1.1}) are exact,  we will show in Section \ref{sec:exact} that  $r_{K}(z)$ will be the wanted rational function in (\ref{tilde}) that determines $f$.

As in \cite{NST18,FH07,Klein}, we will use the  barycentric representation of $r_{K}(z)  = \tilde{p}_{K}(z)/\tilde{q}_{K}(z)$ with 
\begin{equation}\label{rat0} \tilde{p}_{K}(z) := \sum_{j=1}^{K+1} \frac{w_j \, \tilde{c}_{n_j}}{z-n_j^2}, \qquad  \tilde{q}_{K}(z) := \sum_{j=1}^{K+1} \frac{w_j}{z-n_j^2}, 
\end{equation}
where $w_j$, $j=1, \ldots , K+1$, are nonzero weights, and where $n_{j} \in \Gamma$ for $j=1, \ldots, K+1$.
Here, $n_{j}^{2}$, $j=1, \ldots , K+1$, cannot  occur as poles of $r_{K}(z)$,  since the poles  $C_{j}$ in (\ref{qform}) satisfy $C_{j}= a_{j}^{2} P^{2} \not\in {\mathbb N}$ by assumption. 
It can be simply observed that $\tilde{p}_{K}(z)/\tilde{q}_{K}(z)$ is indeed a rational function of type $(K,K)$. In order to ensure that $r_{K}(z)$ is of the wanted type $(K-1,K)$, we require the additional condition $\sum_{j=1}^{K+1} w_{j} \, \tilde{c}_{n_{j}} =0$. 

The representation (\ref{rat0}) already incorporates the interpolation conditions $r_{K}(n_{k}^{2}) = \tilde{c}_{n_{k}}(f)$, for $k=1, \ldots, K+1$, since 
we have  for $w_{k} \neq 0$
$$
\lim_{z \to n_{k}^{2}} \frac{\tilde{p}_{K}(z)}{\tilde{q}_{K}(z)} = \lim_{z \to n_{j}^{2}} \frac{\sum\limits_{j=1}^{K+1} w_{j} \, \tilde{c}_{n_{j}} \, \prod\limits_{\mycom{\ell=1}{\ell\neq j}}^{K+1} (z- n_{j}^{2}) }{\sum\limits_{j=1}^{K+1} w_{j}  \, \prod\limits_{\mycom{\ell=1}{\ell\neq j}}^{K+1} (z- n_{j}^{2})} 
=\frac{w_{k} \, \tilde{c}_{n_{k}}}{w_{k}} = \tilde{c}_{n_{k}}. 
$$
Let $S_{K+1} :=\{n_{1}, \ldots , n_{K+1}\}$ be the index set, where  for nonzero weights $w_{k}$ the interpolation conditions are already satisfied.
To determine $r_{K}(z)$ using (\ref{rat0}) we still need to fix the normalized weight vector ${\mathbf w}=(w_{1}, \ldots , w_{K+1})^{T}$.
According to \cite{NST18}, this is done by solving a least squares problem in order to minimize the error 
$$ \sum_{n \in \Gamma \setminus S_{K+1}} |\tilde{c}_{n} \, \tilde{q}_{K}(n^{2}) - \tilde{p}_{K}(n^{2}) |^{2}, $$
where beside $\| {\mathbf w} \|_{2} =1$, in our case we will incorporate the side condition $\sum_{j=1}^{K+1} w_{j} \, \tilde{c}_{n_{j}} =0$ to ensure that $r_{K}(z)$ is of type $(K-1,K)$. 
 The algorithm is described in the next two sections and closely follows the approach in \cite{NST18} with the modification that we want to get a rational function of type $(K-1,K)$ instead of type $(K,K)$. 

\subsection{Initialization of the Modified AAA Algorithm}
\label{sec:ini}

We start by initializing the modified AAA-algorithm as follows.
First, we  choose the two given modified Fourier coefficients  $\tilde{c}_{n_1}$, $\tilde{c}_{n_2}$ with largest modulus (where $n_1 \neq n_2$ and $n_1, \, n_2 \in \Gamma$) for interpolation and compute a rational function $r_{1}(z)$ of type $(0,1)$ that interpolates $\tilde{c}_{n_1}$ at $z=n_{1}^{2}$ and $\tilde{c}_{n_2}$ at $z=n_{2}^{2}$.
We  determine the rational function
$$ r_1(z) = \frac{\tilde{c}_{n_1} \tilde{c}_{n_2} (n_2^2 - n_1^2)}{(\tilde{c}_{n_2} n_2^2- \tilde{c}_{n_1} n_1^2) + z(\tilde{c}_{n_1}- \tilde{c}_{n_2})}, $$
such that $r_1(n_1^2) = \tilde{c}_{n_1}$ and $r_1(n_2^2) = \tilde{c}_{n_2}$ holds. A barycentric form of  $r_1(z)$ as in (\ref{rat0}) is given by
$$ r_1(z) :=  \frac{\tilde{p}_1(z)}{\tilde{q}_1(z)} = \frac{\frac{w_1 \, \tilde{c}_{n_1}}{z-n_1^2} + \frac{w_2 \, \tilde{c}_{n_2}}{z-n_2^2}}{\frac{w_1}{z-n_1^2} + \frac{w_2}{z-n_2^2}}, $$
with the (complex) weights
\begin{equation}\label{ww} w_1 = \frac{-\tilde{c}_{n_2}}{\sqrt{|\tilde{c}_{n_1}|^2 + |\tilde{c}_{n_2}|^2}}, \qquad  w_2=\frac{\tilde{c}_{n_1}}{\sqrt{|\tilde{c}_{n_1}|^2 + |\tilde{c}_{n_2}|^2}} 
\end{equation}
satisfying $|w_1|^2+|w_2|^2=1$ and $w_1 \tilde{c}_{n_1} + w_2 \tilde{c}_{n_2} =0$. 
Obviously, $\tilde{p}_{1}(z)$ and $\tilde{q}_{1}(z)$  are themselves rational functions of type (at most) $(1,2)$.
The  condition $w_1 \tilde{c}_{n_1} + w_2 \tilde{c}_{n_2} =0$ 
ensures that the polynomial 
$$\tilde{p}_1(z) \, (z-n_1^2)(z-n_2^2) = w_{1} \tilde{c}_{n_{1}} (z-n_{2}^{2}) + w_{2} \tilde{c}_{n_{2}} (z-n_{1}^{2}) 
$$ 
is only constant (and not linear).

In order to decide, which interpolation point should be taken at the next iteration step, 
we consider the error $|r_1(n^2) - \tilde{c}_n|$ for all $n \in \Gamma \setminus \{ n_{1}, n_{2}\}$. 
Following the lines of \cite{NST18}, we use
the notation  $S_2:= \{n_1, \, n_2 \} \subset \Gamma$ and $\Gamma_{2}: = \Gamma \setminus S_2$. Let the Cauchy matrix ${\mathbf C}_2$ be given by 
$ {\mathbf C}_2 := \left( \frac{1}{n^2-n_j^2}\right)_{n \in \Gamma_{2}, n_j \in S_{2}} $ with $2$ columns and $L-2$ rows. 
Then 
the vectors of function values $\left(\tilde{p}_1(n^2) \right)_{n \in \Gamma_{2}}$ and $\left(\tilde{q}_1(n^2) \right)_{n \in \Gamma_{2}}$ satisfy 
$$ \left(\tilde{p}_1(n^2) \right)_{n \in \Gamma_{2}} = {\mathbf C}_2 \left(\begin{array}{l} \!\!w_1 \, \tilde{c}_{n_1} \!\!\\ \!\!w_2 \, \tilde{c}_{n_2} \!\!\end{array} \right), \qquad
\left(\tilde{q}_1(n^2) \right)_{n \in \Gamma_{2}} = {\mathbf C}_{2} \left(\begin{array}{l} \!\!w_1 \!\! \\\!\! w_2 \!\! \end{array} \right), $$
and $\max\limits_{n \in \Gamma_{2}} |r_1(n^2) - \tilde{c}_n| = \max\limits_{n \in \Gamma_{2}} |\tilde{p}_{1}(n^2)/\tilde{q}_{1}(n^{2}) - \tilde{c}_n|$ can be easily computed.
We choose $\tilde{n} = \argmax\limits_{n \in \Gamma_{2}} |r_1(n^2) - \tilde{c}_n|$ as the next index for interpolation and set $S_3:= S_2 \cup \{ \tilde{n}\}$ and $\Gamma_{3}:= \Gamma_{2} \setminus \{ \tilde{n} \}$.

\subsection{General Iteration Step of the Modified AAA Algorithm}
\label{sec:gene}

At step $(J-1)>1$, assume that we have given the index set  $S_{J}:=\{n_1, \ldots , n_{J}\} \subset \Gamma$, where we want to interpolate, and let $\Gamma_{J} := \Gamma \setminus  S_{J}$. 
 Further, let 
$$\tilde{\mathbf c}_{S_{J}}:= \left( \tilde{c}_{n_j} \right)_{j=1}^{J} \in {\mathbb C}^J, \qquad  
\tilde{\mathbf c}_{\Gamma_{J}} := \left( \tilde{c}_{n} \right)_{n \in \Gamma_{J}} \in {\mathbb C}^{L-J},$$ 
be the given vectors of (modified) Fourier coefficients as in (\ref{tilde}), where we will take $\tilde{\mathbf c}_{S_{J}}$  for interpolation  and $\tilde{\mathbf c}_{\Gamma_{J}}$ for approximation.

We use a barycentric representation as in (\ref{rat0}) and start with the ansatz  $r_{J-1}(z)  = \tilde{p}_{J-1}(z)/\tilde{q}_{J-1}(z)$ with 
\begin{equation}\label{rat1} \tilde{p}_{J-1}(z) := \sum_{j=1}^J \frac{w_j \, \tilde{c}_{n_j}}{z-n_j^2}, \qquad  \tilde{q}_{J-1}(z) := \sum_{j=1}^J \frac{w_j}{z-n_j^2}, 
\end{equation}
 and weights $w_j$, $j=1, \ldots , J$.
Then $r_{J-1}(z)$ already satisfies 
the interpolation conditions $r_{J-1}(n_j^2) = \tilde{c}_{n_j}$ for  $w_{j} \neq 0$.
The vector of weights ${\mathbf w} := (w_1, \ldots , w_J)^T$ has still be chosen suitably
with the side  conditions
\begin{equation}\label{cond}
\| {\mathbf w} \|_{2}= 1 \qquad \textrm{and} \qquad {\mathbf w}^T  \,  \tilde{\mathbf c}_{S_{J}}= \sum_{j=1}^J w_j \tilde{c}_{n_j} = 0
\end{equation}
to ensure that $r_{J-1}(z)$ is of type $(J-2, J-1)$.
As in the original AAA algorithm, the remaining freedom to choose ${\mathbf w}$ is now used in order to approximate the (modified) Fourier coefficients $\tilde{c}_n$
by $r_{J-1}(n^2)$ for $n \in \Gamma_{J}$ applying a (linearized) least squares approach. Observing that 
$r_{J-1}(z) \tilde{q}_{J-1}(z) = \tilde{p}_{J-1}(z)$, we consider the minimization problem 
\begin{equation}\label{mini}
{\mathbf w}_J := \argmin_{\mycom{\| {\mathbf w}\|_2=1}{{\mathbf w}^T \tilde{\mathbf c}_{S_{J}=0}}} \sum_{n \in \Gamma_{J}} \left|\tilde{c}_{n} \, \tilde{q}_{J-1}(n^2)-\tilde{p}_{J-1}(n^2)\right|^2.
\end{equation}
Similarly to \cite{NST18}, we  define the matrices 
$$ 
{\mathbf A}_J:= \left( \frac{\tilde{c}_n - \tilde{c}_{n_j}}{n^2-n_j^2} \right)_{n \in \Gamma_{J}, n_j \in S_J} \in {\mathbb C}^{L-J \times J},\quad 
{\mathbf C}_J:= \left( \frac{1}{n^2-n_j^2} \right)_{n \in \Gamma_{J}, n_j \in S_J} \in {\mathbb R}^{L-J \times J}.$$
Then we can write
\begin{equation}\label{rater}
\sum_{n \in \Gamma_{J}} \left|\tilde{c}_{n} \, \tilde{q}_{J-1}(n^2)-\tilde{p}_{J-1}(n^2)\right|^2 = \sum_{n \in \Gamma_{J}} \left|       {\mathbf w}^T \, \left( 
\frac{\tilde{c}_n- \tilde{c}_{n_j}}{n^2-n_j^2} \right)_{j=1}^J \right|^2 
= \| {\mathbf A}_J {\mathbf w} \|_2^2, 
\end{equation}
such that the minimization problem in (\ref{mini}) takes the form
\begin{equation}\label{mini1}
{\mathbf w}_J := \argmin_{\mycom{\| {\mathbf w}\|_2=1}{{\mathbf w}^T \tilde{\mathbf c}_{S_{J}=0}}}  \| {\mathbf A}_J {\mathbf w} \|_2^2.
\end{equation}

To solve the minimization problem (\ref{mini1}) approximatively, we compute the right  (normalized) singular vectors ${\mathbf v}_1$ and ${\mathbf v}_2$ of ${\mathbf A}_J$ corresponding to the two smallest singular values  $\sigma_{1} \le \sigma_{2}$ of ${\mathbf A}_J$  and take a linear combination ${\mathbf w}_{J} = \mu_1 {\mathbf v}_1 + \mu_2 {\mathbf v}_2$ such that $\| {\mathbf w}_{J}\|_2=1$ and ${\mathbf w}_{J}^T \tilde{\mathbf c}_{S_J} =0$.
These conditions are satisfied for 
\begin{equation}\label{w} {\mathbf w}_J = 
\frac{1}{\sqrt{({\mathbf v}_1^T \tilde{\mathbf c}_{S_J})^2 + ({\mathbf v}_2^T \tilde{\mathbf c}_{S_J})^2}} \left( ({\mathbf v}_2^T \tilde{\mathbf c}_{S_J})   \, {\mathbf v}_1 - ({\mathbf v}_1^T \tilde{\mathbf c}_{S_J}) \, {\mathbf v}_2 \right). 
\end{equation}

\begin{remark}
Obviously, the right singular vector ${\mathbf v}_{1}$ already solves $\argmin\limits_{\|{\mathbf w}\|_{2} = 1} \| {\mathbf A}_{J}\,  {\mathbf w}\|_{2}^{2}$. 
The vector ${\mathbf w}_J$ in (\ref{w}) is a linear combination  of the two singular 
vectors corresponding to the  two smallest singular values $\sigma_{1} \le \sigma_{2}$ of ${\mathbf A}_{J}$, 
such that $\|{\mathbf A}_{J} {\mathbf w}_{J} \|_{2}^{2} \le \sigma_{2}^{2}$. 
The computed vector ${\mathbf w}_{J}$ in (\ref{w}) is optimal if $\sigma_{1} = \sigma_{2}$ or if ${\mathbf v}_{1}^{T} \, \tilde{\mathbf c}_{S_{J}} = 0$, i.e., if the singular vector ${\mathbf v}_{1}$ to the smallest singular value of ${\mathbf A}_{J}$ already satisfies the side condition (\ref{cond}), and ${\mathbf w}_{J} = {\mathbf v}_{1}$.
We will show in Section \ref{sec:exact} that for $J=K+1$ in case of exact data the matrix ${\mathbf A}_{K+1}$ always possesses a kernel vector that solves (\ref{w}).
\end{remark}

Having determined the weight vector ${\mathbf w}_J$, the rational function $r_{J-1}$ is completely fixed by (\ref{rat1}).
 Now, we consider the errors $| r_{J-1}(n^2) - \tilde{c}_n|$ for all $n \in \Gamma_{J}$, where we do not interpolate.
The algorithm terminates if $\max_{n \in \Gamma_{J}} | r_{J-1}(n^2) - \tilde{c}_n| < \epsilon$ for a predetermined bound $\epsilon$ or if $J-1$ reaches a predetermined maximal degree. Otherwise, we find the next index for interpolation as
$$ n_{J+1} := \argmax_{n \in \Gamma_{J}} | r_{J-1}(n^2) - \tilde{c}_n|. $$
The values $r_{J-1}(n^2) = \frac{\tilde{p}_{J-1}(n^2)}{\tilde{q}_{J-1}(n^2)}$ can be simply computed by the vectors
$$ (\tilde{p}_{J-1}(n^2))_{n \in \Gamma_{J}} = {\mathbf C}_J ({\mathbf w}_{J} \, .* \tilde{\mathbf c}_{S_J}), \quad (\tilde{q}_{J-1}(n^2))_{n \in \Gamma_{J}} = {\mathbf C}_J \, {\mathbf w}_{J} $$
as suggested in \cite{NST18}, where $.*$ indicates pointwise multiplication.

We summarize the modified AAA algorithm to compute the vectors ${\mathbf w}_{K+1}$ and the index vector ${\mathbf S}_{K+1} :=(n_{1}, \ldots , n_{K+1})^{T}$ determining the rational function $r_{K}(z) = \frac{p_{K-1}(z)}{q_{K}(z)} = \frac{\tilde{p}_{K}(z)}{\tilde{q}_{K}(z)}$ in (\ref{tilde}) resp.\ (\ref{rat1}) that interpolates the given modified Fourier coefficients $\tilde{c}_{n}$ if $n$ is a component of ${\mathbf S}_{K+1}$, and approximates $\tilde{c}_{n}$ otherwise. 

\begin{algorithm}[Rational approximation of modified Fourier coefficients by modified AAA]
\label{alg1}
\textbf{Input:} \\
$P>0$ period used for computing the Fourier coefficients\\
${\mathbf \Gamma}  \in {\mathbb N}^{L}$ vector of indices of given Fourier coefficients with  sufficiently large $L$ \\ 
$ {\mathbf c} \in {\mathbb C}^{L}$ vector of given Fourier coefficients (corresponding to ${\mathbf \Gamma} $)\\
$tol>0$ tolerance for the approximation error (e.g. $tol = 10^{-10}$)\\
$mmax \in {\mathbb N}$ maximal order of polynomials in the rational function
\begin{description}
\item{Initialization:}
Build $\tilde{\mathbf c} = \textrm{Real} ({\mathbf c}) + {\mathrm i} \, \textrm{Imag}({\mathbf c})./{\mathbf \Gamma}$.\\
Initialize the vectors ${\mathbf S}= []$, $\tilde{\mathbf c}_{S}=[]$. 
\begin{enumerate}
\item Find the two components $\tilde{c}_{n_{1}}$, $\tilde{c}_{n_{2}}$  of $\tilde{\mathbf c}$ with largest absolute values.\\
 Update ${\mathbf S},\, \tilde{\mathbf c}_{S}, \, {\mathbf \Gamma}, \, \tilde{\mathbf c}$ by adding $n_{1}$, $n_{2}$ as components of ${\mathbf S}$ and deleting these components in ${\mathbf \Gamma}$, adding $\tilde{c}_{n_{1}}$, $\tilde{c}_{n_{2}}$ in $\tilde{\mathbf c}_{S}$ and deleting them in $\tilde{\mathbf c}$. 
\item Compute ${\mathbf w} = (w_{1}, w_{2})^{T}$ via (\ref{ww}).
\item Compute ${\mathbf p} =  
{\mathbf C}_2 \left(\begin{array}{l} w_1 \, \tilde{c}_{n_1} \\ w_2 \, \tilde{c}_{n_2} \end{array} \right), \;
{\mathbf q} = {\mathbf C}_{2} \left(\begin{array}{l} w_1  \\ w_2  \end{array} \right)$, with 
${\mathbf C}_{2} = \left( \frac{1}{n^{2} - k^{2}} \right)_{n \in {\mathbf \Gamma}, k \in {\mathbf S}}$ 
and let ${\mathbf r} = (r_{j})_{j=1}^{L-2} = {\mathbf p} ./ {\mathbf q} \in {\mathbb C}^{L-2} $.
\item If $\| {\mathbf r} - \tilde{\mathbf c}\|_{\infty} < tol$ then stop.
\end{enumerate}
\item{Main Loop}\\
for $m=3:mmax$
\begin{enumerate}
\item Compute $k = \textrm{argmax}_{n \in {\mathbf \Gamma}} | r_{n} - \tilde{c}_{n} |$ and update  ${\mathbf S},\, \tilde{\mathbf c}_{S}, \, {\mathbf \Gamma}, \, \tilde{\mathbf c}$ by adding $k$ as a component of ${\mathbf S}$ and deleting it in ${\mathbf \Gamma}$, adding $\tilde{c}_{k}$ as a component of $\tilde{\mathbf c}_{S}$ and deleting it in $\tilde{\mathbf c}$.
\item Build the matrices ${\mathbf C}_{m} = \left( \frac{1}{n^{2} - k^{2}} \right)_{n \in {\mathbf \Gamma}, k \in {\mathbf S}}$ and 
${\mathbf A}_{m}= \left( \frac{\tilde{c}_{n} - \tilde{c}_{k}}{n^{2} - k^{2}} \right)_{n \in {\mathbf \Gamma}, k \in {\mathbf S}}$.
\item Compute the normalized right singular vectors ${\mathbf v}_{1}$ and ${\mathbf v}_{2}$ corresponding to the two smallest singular values of ${\mathbf A}_{m}$.
\item Compute ${\mathbf w} = ({\mathbf v}_{2}^{T} \tilde{\mathbf c}_{S}) \, {\mathbf v}_{1} - ({\mathbf v}_{1}^{T} \tilde{\mathbf c}_{S}) \, {\mathbf v}_{2}$ and normalize ${\mathbf w} = \frac{1}{\| {\mathbf w}\|_{2}} {\mathbf w}$.
\item  Compute ${\mathbf p} =  
{\mathbf C}_m \left({\mathbf w} .* \tilde{\mathbf c}_{S}  \right), \;
{\mathbf q} = {\mathbf C}_{m} \, {\mathbf w}$, and ${\mathbf r} = {\mathbf p}./ {\mathbf q} \in {\mathbb C}^{L-m}$.
\item  If $\| {\mathbf r} - \tilde{\mathbf c}\|_{\infty} < tol$ then stop.
\end{enumerate}
end(for)
\end{description}
\textbf{Output:} $K= m-1, \, {\mathbf S} \in {\mathbb N}^{K+1}, \, \tilde{\mathbf c}_{S} \in {\mathbb C}^{K+1}, \, {\mathbf w} \in {\mathbb C}^{K+1}$, where ${\mathbf S}$ is the vector of indices, $\tilde{\mathbf c}_{S}$ the corresponding coefficient vector of interpolation values, and ${\mathbf w}$ the weight vector to determine the rational function $r_{K}$ via (\ref{rat1}).
\end{algorithm}

 As we will show in Section \ref{sec:exact}, it will be sufficient to employ $L=2K+1$ Fourier  coefficients if the function $f$ has no $P$-periodic components $f_{j}$.
In  Section \ref{sec:periodic} we will prove that the algorithm can be also applied  if $f$ contains  $P$-periodic terms $f_{j}$ with frequencies $a_{j} \in \frac{1}{P}{\mathbb N}$. 
In this case we need  $L=2K+2$ Fourier  coefficients, and the rational output function will only determine the Fourier coefficients of the non-periodic part of $f$.

\subsection{Partial Fraction Representation of the Rational Approximant}
\label{sec:partial}

Assume that we have found the rational approximant $r_{K}(z)$ 
after $K$ iteration steps.
In other words,  we have now given the vector of interpolation indices $(n_1, \ldots , n_{K+1})^{T}$
and the weight vector $ {\mathbf w}_{K+1} = (w_{j})_{j=1}^{K+1} \in {\mathbb C}^{K+1}$, such that 
\begin{equation}\label{p1q1}
 r_{K}(z)  = \frac{\tilde{p}_K(z)}{\tilde{q}_K(z)} \qquad \textrm{with} \qquad  \tilde{p}_K(z) = \sum_{j=1}^{K+1} \frac{w_j \, \tilde{c}_{n_j}}{z-n_j^2} ,\qquad 
 \tilde{q}_K(z)  = \sum_{j=1}^{K+1} \frac{w_j}{z-n_j^2} .
 \end{equation}  
As we will show in Section \ref{sec:exact},  $r_{K}(z)$ determined by Algorithm \ref{alg1} coincides with the desired rational function in (\ref{tilde}) for exact data,  and, in particular, $w_{j} \neq 0$ for $j=1, \ldots , K+1$.
To extract the wanted parameters $A_j$, $B_j$, and $C_j$ in (\ref{cnrat})
from $\{n_1, \ldots , n_{K+1}\}$ and ${\mathbf w}_{K+1}$, we need to rephrase $r_{K}(z)$ in the form 
\begin{equation}\label{rk1} r_K(z) =   \sum_{j=1}^K \frac{A_j+ {\mathrm i}\, B_j }{z- C_j}. 
\end{equation}
The  parameters 
$C_j$, $j=1, \ldots , K$, are the zeros of the rational function $\tilde{q}_{K}(z)$ in (\ref{p1q1}), since $z=n_{j}^{2}$ cannot occur as poles of $r_{K}(z)$ due to the assumption $C_{j}^{1/2} \not\in {\mathbb N}_{0}$, which is by (\ref{cj}) equivalent with $a_{j} \not\in \frac{1}{P} {\mathbb N}_{0}$.

To compute the zeros of $\tilde{q}_K(z)$ we  again draw from the results in \cite{NST18} or \cite{Klein} and consider the generalized eigenvalue problem 
\begin{equation}\label{eig} \left( \begin{array}{ccccc}
0 & w_1 & w_2 & \ldots & w_{K+1} \\
1 & n_1^2 &   &     & \\
1 & & n_2^2 & & \\
\vdots & & & \ddots & \\
1 & & & & n_{K+1}^2 \end{array} \right) \, {\mathbf v}_{\lambda}= \lambda \left( \begin{array}{ccccc}
0 & & & & \\
& 1 & & & \\
& & 1 & & \\
& & & \ddots & \\
& & & & 1 \end{array} \right) \, {\mathbf v}_{\lambda}.
\end{equation}
Observe that $\lim_{z \to \pm \infty} \tilde{q}(z) = 0$ causes two infinite eigenvalues that we are not interested in.
The other $K$ eigenvalues are the wanted zeros of $\tilde{q}_K(z)$. This can be simply seen by taking the eigenvectors ${\mathbf v}_{\lambda}$ corresponding the eigenvalues $\lambda$ of the form 
$${\mathbf v}_{\lambda} = \left(1, \, \frac{1}{\lambda-n_1^2}, \, \ldots , \frac{1}{\lambda -n_{K+1}^2} \right)^T,$$
such that 
$$ \left( \begin{array}{cccc}
0 & w_1 & \ldots & w_{K+1} \\
1 & n_1^2 &       & \\
\vdots & &  \ddots & \\
1 & & & n_{K+1}^2 \end{array} \right)  {\mathbf v}_{\lambda}
= \left( \begin{array}{c} \sum\limits_{j=1}^{K+1} \frac{w_{j}}{\lambda-n_{j}^{2}} \\
1 + \frac{n_{1}^{2}}{\lambda- n_{1}^{2}} \\
\vdots \\ 
1 + \frac{n_{K+1}^{2}}{\lambda- n_{K+1}^{2}} \end{array}\right)  = 
\left(  \begin{array}{c}
\tilde{q}_{K}(\lambda) \\ \frac{\lambda}{\lambda - n_{1}^{2}} \\ \vdots  \\ \frac{\lambda}{\lambda-n_{K+1}^{2}} \end{array} \right) = \lambda 
\left( \begin{array}{c}
0 \\ \frac{1}{\lambda - n_{1}^{2}} \\ \vdots  \\ \frac{1}{\lambda-n_{K+1}^{2}} \end{array} \right).
$$
Having found the $K$ zeros $C_j$ of this eigenvalue problem, we obtain from the interpolation conditions $r_{K}(n_{\ell}^{2}) = \tilde{c}_{n_{\ell}}$ for $\ell=1, \ldots , K+1$ with (\ref{rk1}) the linear equation system 
$$ \sum_{j=1}^{K} \frac{A_{j} + {\mathrm i} B_{j}}{ n_{\ell}^{2} - C_{j}} = \tilde{c}_{n_{\ell}}, \qquad \ell =1, \ldots , K+1, $$
in order to determine $A_{j} + {\mathrm i} B_{j}$ $j=1, \ldots , K$.
We summarize the reconstruction of the parameter vectors $(A_{j})_{j=1}^{K}$, $(B_{j})_{j=1}^{K}$, and $(C_{j})_{j=1}^{K}$ from the output of Algorithm \ref{alg1} in Algorithm \ref{alg2}.

\begin{algorithm}[Reconstruction of parameters $A_{j}, \, B_{j}, \, C_{j}$ of partial fraction representation]
\label{alg2}
\textbf{Input:} ${\mathbf S} \in {\mathbb N}^{K+1}, \, \tilde{\mathbf c}_{S} \in {\mathbb C}^{K+1}, \, {\mathbf w} \in {\mathbb C}^{K+1}$, the output vectors of Algorithm \ref{alg1}
\begin{enumerate}
\item Build the matrices in (\ref{eig}) and solve this generalized eigenvalue problem to obtain the parameter vector $(C_{1}, \ldots, C_{K})^{T}$ of finite eigenvalues.
\item Build the matrix ${\mathbf V} = \left( \frac{1}{n^{2}- C_{j}}\right)_{n \in {\mathbf S}, j=1, \ldots , K} \in {\mathbb R}^{(K+1) \times K}$ and solve the linear system 
$$  {\mathbf V} \, {\mathbf x} = \tilde{\mathbf c}_{S}. $$
Set $(A_{j})_{j=1}^{K} = \textrm{Real} \, (\mathbf {x})$ and $(B_{j})_{j=1}^{K} = \textrm{Imag} \, (\mathbf{x})$.
\end{enumerate}
\textbf{Output:} Parameter vectors $(A_{j})_{j=1}^{K}$, $(B_{j})_{j=1}^{K}$, and $(C_{j})_{j=1}^{K}$.
\end{algorithm}

Finally, we can reconstruct the wanted parameter vectors $(\gamma_{j})_{j=1}^{K}$, $(a_{j})_{j=1}^{K}$ and $(b_{j})_{j=1}^{K}$ in (\ref{1.1}) via Theorem \ref{theo3}.

\section{Exact Reconstruction using  Algorithm \ref{alg1}}
\label{sec:exact}

Assume that  the Fourier coefficients $c_{n}(f)$ of a function $f$ in (\ref{1.1}) with $K$ components $f_j$ are given for $n \in \Gamma \subset {\mathbb N}$ with $\#\Gamma \ge 2K+1$, where we suppose that $a_j \not\in \frac{1}{P} {\mathbb Z}$ for all components $f_{j}$ of $f$.
We show that $f$ can be uniquely reconstructed from $2K$ Fourier coefficients $c_{n}(f)$.
Moreover, if at least $2K+1$ Fourier coefficients $c_{n}(f)$ are given, then Algorithm \ref{alg1} terminates after $K$ steps (i.e., taking $K+1$  interpolation points).

\begin{theorem}\label{theoexact}
Let $f$ be of the form $(\ref{1.1})$  
with $K \in {\mathbb N}$, $\gamma_{j} \in (0, \infty)$, 
and $(a_{j}, \, b_{j}) \in (0, \infty) \times  [0, \, 2\pi)$  
$j=1, \ldots , K$.
Further, let  $a_{j}$ be pairwise different and $a_{j}\not\in \frac{1}{P} {\mathbb  N}$ for a given $P>0$. Assume that we have a set of classical  Fourier coefficients $c_{n}(f)$, $n \in \Gamma \subset {\mathbb N}$ (with regard to period $P$) with $L = \# \Gamma \ge 2K+1$.  Then $f$ is uniquely determined by $2K$ of these Fourier coefficients and  Algorithm $\ref{alg1}$ terminates after $K$ steps taking $K+1$ interpolation points and determines the rational function $r_{K}(z) = \sum_{j=1}^{K} \frac{A_{j}+ {\mathrm i} B_{j}}{z-C_{j}}$ in $(\ref{tilde})$ that interpolates all $\tilde{c}_{n}(f)$, $n \in {\mathbb N}$,  exactly.
\end{theorem}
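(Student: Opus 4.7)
The plan is to prove three separate claims: $(a)$ $f$ is uniquely determined by any $2K$ of the given classical Fourier coefficients; $(b)$ Algorithm \ref{alg1} produces the exact rational function $r_K(z)=\sum_{j=1}^K(A_j+\mathrm iB_j)/(z-C_j)$ at iteration $J=K+1$; and $(c)$ it cannot terminate earlier. Claim $(a)$ follows from the classical uniqueness of type-$(K-1,K)$ rational interpolants: by (\ref{tilde}) each modified Fourier coefficient satisfies $\tilde c_n(f)=r_K(n^2)$ with $r_K=p_{K-1}/q_K$, and if $p/q$ and $\hat p/\hat q$ are two such rational functions agreeing at $2K$ distinct points $n_j^2$, then the polynomial $p\hat q-\hat p q$ has degree at most $2K-1$ with $2K$ zeros, hence vanishes identically. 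Theorem \ref{theo3} then recovers $\gamma_j,a_j,b_j$.

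For claim $(b)$, given any $K+1$ interpolation indices $S_{K+1}=\{n_1,\dots,n_{K+1}\}\subset\Gamma$ selected by the greedy procedure, set
\begin{equation*}
w_j^* := \frac{q_K(n_j^2)}{\prod_{\ell\ne j}(n_j^2-n_\ell^2)}, \qquad j=1,\ldots,K+1.
\end{equation*}
The assumption $a_j\notin\tfrac 1P\mathbb N$ forces $C_j=a_j^2P^2\notin\{n^2:n\in\mathbb N\}$, so every $w_j^*$ is nonzero, and a direct verification shows that $r_K$ equals its barycentric representation (\ref{rat0}) with weights $w_j^*$ (indeed, both $\tilde p_K(z)\prod_j(z-n_j^2)$ and $p_{K-1}(z)$, respectively $\tilde q_K(z)\prod_j(z-n_j^2)$ and $q_K(z)$, are polynomials of degree at most $K$ that agree at the $K+1$ nodes). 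The type-$(K-1,K)$ structure of $r_K$ is equivalent to $\sum_j w_j^*\tilde c_{n_j}=0$, so $\mathbf w^*$ meets the side condition of (\ref{mini1}), and for every $n\in\Gamma_{K+1}$
\begin{equation*}
(\mathbf A_{K+1}\mathbf w^*)_n=\tilde c_n\,\tilde q_K(n^2)-\tilde p_K(n^2)=\tilde q_K(n^2)\bigl(\tilde c_n-r_K(n^2)\bigr)=0.
\end{equation*}
Thus $\mathbf w^*$ is a feasible minimizer of (\ref{mini1}) attaining value zero.

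The main obstacle is to show that this feasible kernel vector is unique up to scaling, so that the SVD-based formula (\ref{w}) necessarily returns $\pm\mathbf w^*/\|\mathbf w^*\|_2$. For any $\mathbf w\in\ker\mathbf A_{K+1}$ with $\mathbf w^T\tilde{\mathbf c}_{S_{K+1}}=0$, define
\begin{equation*}
P(z):=\sum_{j=1}^{K+1}w_j\tilde c_{n_j}\prod_{\ell\ne j}(z-n_\ell^2), \qquad Q(z):=\sum_{j=1}^{K+1}w_j\prod_{\ell\ne j}(z-n_\ell^2),
\end{equation*}
so that $\deg P\le K-1$ (by the side condition) and $\deg Q\le K$. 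The kernel equation on $\Gamma_{K+1}$, combined with the trivial identity $P(n_j^2)=\tilde c_{n_j}Q(n_j^2)$ at each node $n_j$, shows that $R(z):=p_{K-1}(z)Q(z)-P(z)q_K(z)$, of degree at most $2K-1$, vanishes on all $L\ge 2K+1$ points $\{n^2:n\in\Gamma\}$, hence $R\equiv 0$. A short check gives $\gcd(p_{K-1},q_K)=1$: the roots $C_j$ of $q_K$ are distinct, and $p_{K-1}(C_j)=(A_j+\mathrm iB_j)\prod_{\ell\ne j}(C_j-C_\ell)\ne 0$, because (\ref{aj})-(\ref{bj}) yield $\sin(a_j\pi P)\ne 0$ together with the fact that $\cos(a_j\pi P+b_j)$ and $\sin(a_j\pi P+b_j)$ cannot vanish simultaneously. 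Coprimality then forces $Q=\alpha q_K$ and $P=\alpha p_{K-1}$ for some scalar $\alpha\in\mathbb C$, whence $\mathbf w=\alpha\mathbf w^*$.

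For $(c)$, the same degree count rules out premature termination: if at step $J-1<K$ the rational $r_{J-1}=\tilde p_{J-1}/\tilde q_{J-1}$ of type $(J-2,J-1)$ satisfied $r_{J-1}(n^2)=\tilde c_n$ on all of $\Gamma$, then $p_{K-1}\tilde q_{J-1}-\tilde p_{J-1}q_K$ would be a polynomial of degree at most $J+K-2\le 2K-2$ vanishing on $L\ge 2K+1$ points, hence identically zero, and coprimality of $p_{K-1}$ and $q_K$ combined with $\deg\tilde q_{J-1}=J-1<K=\deg q_K$ yields a contradiction. At step $K+1$, the previous paragraph gives $r_{J-1}\equiv r_K$ on all of $\mathbb N$, so $\|\mathbf r-\tilde{\mathbf c}\|_\infty=0$ and the algorithm terminates. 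The hard part of the whole argument, as already flagged by the remark after (\ref{w}), is the one-dimensionality of the feasible kernel; the rest is bookkeeping with the barycentric form.
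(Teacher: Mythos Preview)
Your proof is correct and covers the same ground as the paper's, but the core technical step is handled quite differently. For the uniqueness claim (a), both arguments reduce to coprimality of $p_{K-1}$ and $q_K$ plus a degree count; the paper phrases this via an explicit $2K\times(2K+1)$ coefficient matrix $\mathbf W$ whose kernel it shows to be one-dimensional, while you argue directly with the polynomial $p\hat q-\hat p q$.

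The real divergence is in (b). The paper never writes down the barycentric weights explicitly; instead it factorizes
\[
\mathbf A_{K+1}=\left(\tfrac{1}{n^2-C_j}\right)_{n\in\Gamma_{K+1},\,j}\,\mathrm{diag}\bigl(-(A_j+\mathrm iB_j)\bigr)_{j=1}^{K}\,\left(\tfrac{1}{n_\ell^2-C_j}\right)_{j,\,n_\ell\in S_{K+1}}
\]
as a product of two full-rank Cauchy matrices and a nonsingular diagonal, which immediately gives $\mathrm{rank}\,\mathbf A_{K+1}=K$ and hence a one-dimensional kernel; it then reads off from Cauchy-matrix properties that the kernel vector has no zero entries. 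You instead construct $w_j^*=q_K(n_j^2)/\prod_{\ell\ne j}(n_j^2-n_\ell^2)$ by hand, verify it lies in the kernel and satisfies the side condition, and prove uniqueness by a second polynomial-identity argument on $R=p_{K-1}Q-Pq_K$. Your route is more constructive (one sees the weights) and reuses the machinery of part (a); the paper's factorization is slicker for the rank and is reused verbatim in the periodic case of Section~\ref{sec:periodic}.

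Your claim (c), ruling out early termination, is a genuine addition: the paper only shows the algorithm succeeds at step $K$ and does not argue it cannot stop sooner.

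One small remark: your uniqueness argument in (b) assumes the side condition $\mathbf w^T\tilde{\mathbf c}_{S_{K+1}}=0$ from the outset, which a priori only pins down the \emph{feasible} kernel. The same argument with the side condition dropped (so $\deg P\le K$, $\deg R\le 2K$, still $L\ge 2K+1$ zeros) shows the full kernel of $\mathbf A_{K+1}$ is one-dimensional, matching the paper's rank computation and making the passage through formula~(\ref{w}) entirely clean.
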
 

\begin{proof}
1. From (\ref{tilde}) it follows that there exists a rational function $r_{K}(z) = \frac{p_{K-1}(z)}{q_{K}(z)}$ of type $(K-1, K)$ such that $\tilde{c}_{n}= \tilde{c}_{n}(f) = r_{K}(n^{2}) = \frac{p_{K-1}(n^{2})}{q_{K}(n^{2})}$ for all $n \in {\mathbb N}$ with $q_{K}(z)$ in (\ref{qform}) and $p_{K-1}(z)$ in (\ref{pform}). In particular, $p_{K-1}(z)$ and $q_{k}(z)$ are coprime.

First, we show that $r_{K}(z)$ is uniquely determined by $2K$ coefficients $\tilde{c}_{n_{j}}(f)$, $n_{j} \in \Gamma$, $j=1, \ldots , 2K$.
By (\ref{pform}), 
$p_{K-1}(z)$ has at most  degree $K-1$. Further, $q_{K}(z)$ has exactly degree $K$ by (\ref{qform}).
We use the notation $p_{K-1}(z)= \sum_{r=0}^{K-1} p_{r} z^{r}$ and $q_{K}= z^{K} + \sum_{r=0}^{K-1}q_{r} \, z^{r}$. Then the interpolation conditions 
$$  r_{K}(n_{j}^{2}) = \tilde{c}_{n_{j}}, \qquad j=1, \ldots , 2K,$$
yield the equation system 
$$ -\sum_{r=0}^{K-1} p_{r} \, n_{j}^{2r} + \tilde{c}_{n_{j}} \left( n_{j}^{2K} + \sum_{r=0}^{K-1} q_{r} \, n_{j}^{2r} \right)   =0, \qquad j=1, \ldots 2K. $$
This leads to the homogeneous system 
\begin{equation}\label{Wsystem} {\mathbf W} \left( \begin{array}{c} \!\!{\mathbf p}\!\! \\ \!\!{\mathbf q}\!\! \end{array} \right) = {\mathbf 0} 
\end{equation}
with the coefficient matrix
$$ {\mathbf W} = \left( \begin{array}{cccccccc}
\!\!-n_{1}^{0} & -n_{1}^{2}& \!\!\ldots\!\! & -n_{1}^{2K-2} \, & \tilde{c}_{n_{1}}n_{1}^{0} & \tilde{c}_{n_{1}} n_{1}^{2} & \!\!\ldots\!\! &
\tilde{c}_{n_{1}} n_{1}^{2K} \\
\!\!-n_{2}^{0} & -n_{2}^{2}& \!\!\ldots\!\! & -n_{2}^{2K-2} \, & \tilde{c}_{n_{2}}n_{2}^{0} & \tilde{c}_{n_{2}} n_{2}^{2} & \!\!\ldots\!\! &
\tilde{c}_{n_{2}} n_{2}^{2K} \\
\vdots & \vdots & & \vdots & \vdots & \vdots & & \vdots \\
\!\!-n_{2K}^{0} & -n_{2K}^{2}& \!\!\ldots\!\! & -n_{2K}^{2K-2} \, & \tilde{c}_{n_{2K}}n_{2K}^{0} & \tilde{c}_{n_{2K}} n_{2K}^{2} & \!\!\ldots\!\! &
\tilde{c}_{n_{2K}} n_{2K}^{2K} \end{array} \right) \in {\mathbb R}^{2K \times (2K+1)}
$$
and with 
$({\mathbf p}^{T}, \, {\mathbf q}^{T}) = (p_{0}, \ldots , p_{K-1}, q_{0}, \ldots q_{K-1},1)^{T} \in {\mathbb R}^{2K+1}$.

The kernel of ${\mathbf W}$  has at least dimension $1$, and by construction, the vector $({\mathbf p}^{T}, \, {\mathbf q}^{T})$ 
generating the  rational function $r_{K}(z) = p_{K-1}(z) / q_{K}(z)$  satisfies (\ref{Wsystem}).   We show that the kernel of ${\mathbf W}$ has exactly dimension $1$.  Suppose to the contrary that there exists  another vector in the kernel  of ${\mathbf W}$ being linearly independent  of $({\mathbf p}^{T}, \, {\mathbf q}^{T})$. 
Then  we also find  a kernel vector, whose last component vanishes, i.e., of the form 
$
(\breve{\mathbf p}^{T}, \, \breve{\mathbf q}^{T}) = (\breve{p}_{0}, \ldots , \breve{p}_{K-1}, \breve{q}_{0}, \ldots \breve{q}_{K-2}, \breve{q}_{K-1}, 0)^{T}$. 
Thus, there exist polynomials $\breve{p}_{K-1}$ and $\breve{q}_{K-1}$ of at most degree $K-1$  satisfying 
$$ \breve{p}_{K-1}(n_{j}^{2}) = \tilde{c}_{n_{j}}(f) \, \breve{q}_{K-1}(n_{j}^{2}), \qquad j=1, \ldots , 2K.$$
Using the known structure of $\tilde{c}_{n_{j}}(f) = p_{K-1}(n_{j}^{2})/q_{K}(n_{j}^{2}) $ in (\ref{tilde}), we obtain 
$$ \breve{p}_{K-1}(n_{j}^{2})  \, q_{K}(n_{j}^{2}) - p_{K-1}(n_{j}^{2}) \, \breve{q}_{K-1}(n_{j}^{2}) = 0, \qquad j=1, \ldots , 2K.$$
Since the degree of the involved polynomial products is at most $2K-1$, 
it follows that $\breve{p}_{K-1}(z)  \, q_{K}(z) = p_{K-1}(z) \, \breve{q}_{K-1}(z)$ for all $z \in {\mathbb R}$.
But $q_{K}(z)$ is a monic polynomial of degree $K$ and the polynomials $p_{K-1}(z)$, $q_{K}(z)$ are coprime,  and we conclude that $\breve{q}_{K-1}(z)$ possesses all $K$ linear factors of $q_{K}(z)$. This leads to a contradiction, since $\breve{q}_{K-1}(z)$ has degree at most $K-1$.
Therefore, there exists only one normalized solution vector of the form (\ref{Wsystem}),
which is already uniquely defined by $2K$ modified Fourier coefficients of $f$, 
and this solution vector $({\mathbf p}^{T}, \, {\mathbf q}^{T})^{T}$  determines the rational polynomial $r_{K}(z) = \frac{p_{K-1}(z)}{q_{K}(z)}$ that satisfies all $2K$ interpolation conditions. 

2. We show now that  Algorithm \ref{alg1} leads to this unique solution $r_{K}(z)$ after $K$ steps.
Assume that $\Gamma$ contains $L \ge 2K+1$ indices. 
At the $K$-th  iteration step we have chosen a set $S_{K+1}$ of $K+1$ pairwise different indices $n_{\ell} \in \Gamma$ for interpolation and start with the ansatz
\begin{equation}\label{ansatz}
 r(z) = \frac{\sum_{\ell=1}^{K+1} \frac{w_{\ell} \, \tilde{c}_{n_{\ell}}}{z - n_{\ell}^{2}}}{\sum_{\ell=1}^{K+1} \frac{w_{\ell}}{z-n_{\ell}^{2}}}, 
\end{equation} 
such that the interpolation conditions $r(n_{\ell}^{2}) = \tilde{c}_{n_{\ell}}$ are already satisfied for   $\ell=1, \ldots , K+1$,  if $w_{\ell} \neq 0$. 
Let  $\Gamma_{K+1} := \Gamma \setminus S_{K+1}$.
Now,  Algorithm  \ref{alg1} 
determines the weight vector ${\mathbf w}= (w_{\ell})_{\ell=1}^{K+1}$ 
as a linear combination of the two right singular vectors ${\mathbf v}_{1}$ and ${\mathbf v}_{2}$ of 
$$ {\mathbf A}_{K+1} := \left( \frac{\tilde{c}_{n} - \tilde{c}_{n_{\ell}}}{n^{2} - n_{\ell}^{2}}
 \right)_{n\in \Gamma_{K+1}, n_{\ell} \in S_{K+1}} \in {\mathbb C}^{(L-K-1) \times (K+1)}$$
corresponding  to the two smallest singular values $\sigma_{1} \le \sigma_{2}$ 
 with side conditions $\|{\mathbf w}\|_{2} =1$ and $\sum_{\ell=1}^{K+1} w_{\ell} \tilde{c}_{n_{\ell}} =0$.
From (\ref{cnrat}) and (\ref{tilde}) it follows that 
\begin{eqnarray*}
{\mathbf A}_{K+1} \!\!\!&=&  \!\!\! \left( \frac{ \sum\limits_{j=1}^{K} (A_{j} + {\mathrm i} B_{j}) \Big( \frac{1}{n^{2}- C_{j}} - \frac{1}{n_{\ell}^{2} - C_{j}} \Big)}{n^{2}- n_{\ell}^{2}} \right)_{n\in \Gamma_{K+1}, n_{\ell} \in S_{K+1}}\\
 &=&\!\!\!\left(  \sum_{j=1}^{K} \frac{-(A_{j} + {\mathrm i} B_{j})}{(n^{2}-C_{j})(n_{\ell}^{2} - C_{j})} \right)_{n\in \Gamma_{K+1}, n_{\ell} \in S_{K+1}} \\
 &=& \!\!\!\left( \frac{1}{n^{2} - C_{j}} \right)_{n \in \Gamma_{K+1}, j=1, \ldots , K} 
 \mathrm{diag}   \left( (-(A_{j} + {\mathrm i} B_{j}))_{j=1}^{K} \right)   \left( \frac{1}{n_{\ell}^{2} - C_{j}} \right)_{j=1,\ldots, K,  n_{\ell} \in S_{K+1}},
\end{eqnarray*}
where the two Cauchy matrices have full rank $K$ and where the entries $-(A_{j} + {\mathrm i} B_{j})$ of the diagonal matrix do not vanish for all $j=1, \ldots , K$.
Thus,  ${\mathbf A}_{K+1}$ has exactly rank $K$ and therefore a kernel of dimension $1$.  Let ${\mathbf v}_{1}$  be the normalized right singular vector of ${\mathbf A}_{K+1}$ to $\sigma_{1}=0$, i.e., ${\mathbf A}_{K+1} {\mathbf v}_{1}={\mathbf 0}$.
The factorization of ${\mathbf A}_{K+1}$ also implies 
$$ \left( \frac{1}{n_{\ell}^{2} - C_{j}} \right)_{j=1, \ldots , K, n_{\ell} \in S_{K+1}} {\mathbf v}_{1} = {\mathbf 0}. $$
We observe that, if ${\mathbf v}_{1} \neq {\mathbf 0}$ had one or more vanishing components, then $K$ columns of the Cauchy matrix $\left( \frac{1}{n_{\ell}^{2} - C_{j}} \right)_{j=1, \ldots , K, n_{\ell} \in S_{K+1}}$ would be  linearly dependent. But this is not possible, since $C_{j} \not\in {\mathbb N}$ are pairwise distinct and therefore any $K$ columns of this Cauchy matrix are linearly independent.
Thus, all components of ${\mathbf v}_{1}$ are nonzero. 

If we determine the rational polynomial $r(z)$ in (\ref{ansatz}) with ${\mathbf w}:={\mathbf v}_{1}$, then it follows  $r(n_{\ell}^{2}) = \tilde{c}_{n_{\ell}}$ for $n_{\ell} \in S_{K+1}$ by construction, since all weight components  are nonzero.
Moreover,  the condition ${\mathbf A}_{K+1} {\mathbf w} = {\mathbf 0}$ leads to
$$ \sum_{\ell=1}^{K+1} \left(  \frac{\tilde{c}_{n} w_{\ell}}{n^{2} - n_{\ell}^{2}} - \frac{\tilde{c}_{n_{\ell}} w_{\ell}}{n^{2} - n_{\ell}^{2}}\right) = 0, \qquad n \in {\Gamma}_{K+1},  $$
i.e., it follows that $r(n^{2}) = \tilde{c}_{n}$ for all $n \in \Gamma_{K+1}$.
Thus, the first part of the proof implies that the obtained rational function $r(z)$ in (\ref{ansatz}) coincides with $r_{K}(z) = \frac{\tilde{p}_{K}(z)}{\tilde{q}_{K}(z)}$, and
therefore has to be of the wanted type $(K-1, K)$ since it is already uniquely defined by the interpolation conditions.
We conclude that ${\mathbf w}={\mathbf v}_{1}$  already satisfies the side condition $\sum_{\ell=1}^{K+1} w_{\ell} \tilde{c}_{n_{\ell}} =0$ and is therefore the weight vector  computed at the $K$-th iteration step of Algorithm \ref{alg1}.
\end{proof}

\section{How to Proceed if the Function Contains $P$-Periodic Terms}
\label{sec:periodic}

Let us assume that the function  $f(t) = \sum_{j=1}^{K} f_{j}(t)$ is of the form  (\ref{1.1}) with $K$ components $f_j$, where beside non-periodic components $f_j(t) = \gamma_{j} \cos(2\pi a_{j} t + b_{j})$ with $a_j \not\in \frac{1}{P} {\mathbb N}$, there are also periodic components with $a_j \in \frac{1}{P} {\mathbb N}$.
Now, we will study the question, how Algorithm \ref{alg1} proposed in Section \ref{sec:AAA} behaves in this case  and how we can reconstruct $f(t)$.

We assume that the index set $\Gamma$ of given Fourier coefficients of $f$ contains all integers $n_j$, if  $a_j = \frac{n_j}{P}$  occurs as a frequency in a component $f_j$ of $f$. Otherwise, the component $f_j$ cannot be identified from the given data.
We assume further that  $L= \# \Gamma  \ge 2K+2$.
The function $f$ in (\ref{1.1}) (with the usual restrictions $\gamma_{j} \in (0, \infty)$, $(a_{j}, \, b_{j}) \in (0, \infty) \times [0, 2\pi)$) can now be written as $f(t)= \phi_{1}(t) + \phi_{2}(t)$, where
\begin{equation}\label{phi1} 
\phi_{1}(t) = \sum_{j=1}^{K_{1}} \gamma_{j} \, \cos(2\pi a_{j} t + b_{j}), \qquad K_{1} < K,\; a_{j} \not\in \frac{1}{P} {\mathbb N},
\end{equation}
is non-P-periodic such that the modified Fourier coefficients  $\tilde{c}_{n}(\phi_{1}) = \mathrm{Re} \, c_{n}(\phi_{1}) + \frac{{\mathrm i}}{n} \, \textrm{Im} \, c_{n}(\phi_{1})$ are of the form
\begin{equation}\label{cnphi} \tilde{c}_{n}(\phi_{1}) = \sum_{j=1}^{K_{1}} \frac{A_{j} + {\mathrm i}  B_{j}}{n^{2} - C_{j}} = \frac{p_{K_{1}-1}(n^{2})}{q_{K_{1}}(n^{2})} = r_{K_{1}}(n^{2}), 
\end{equation}
similarly as in (\ref{tilde}). The $P$-periodic  part of $f(t)$, 
\begin{equation}\label{phi2}
\phi_{2}(t) = \sum_{j=K_{1}+1}^{K} \gamma_{j} \, \cos( 2\pi a_{j} t + b_{j}) \quad \mathrm{with} \quad   a_{j} \in \frac{1}{P}{\mathbb N},
\end{equation}
has only $K_{2}:=K-K_{1}$ nonzero Fourier coefficients with non-negative index. Let $\Sigma:=\{ P \, a_{K_{1}+1}, \ldots , P\, a_{K} \} \subset {\mathbb N}$ denote the corresponding  index set. Then $K_{2} = \# \Sigma$, and 
\begin{equation}\label{phi2c}
\tilde{c}_{n}(\phi_{2}) = \left\{ \begin{array}{ll} \frac{\gamma_{j}}{2} (\cos b_{j} - \frac{{\mathrm i}}{n} \sin b_{j}) & n \in \Sigma,\\
0 & n \not\in \Sigma. \end{array} \right.
\end{equation} 
Since $\tilde{c}_{n}(f) =  \tilde{c}_{n}(\phi_{1}) + \tilde{c}_{n}(\phi_{2})$ for $n \in {\mathbb N}$, it follows that only $K_{2}$  modified Fourier coefficients of $f$ are not of the  form as in (\ref{tilde}) while all $\tilde{c}_{n}(f)$ with $n \not\in \Sigma$ satisfy $\tilde{c}_{n}(f) = \tilde{c}_{n}(\phi_{1})$ and can be reconstructed by a rational function of type $(K_{1}-1, K_{1})$. Let us now examine, how to reconstruct  $f(t)$  in this setting.

\begin{theorem}\label{theoperiodic}
Assume that $f(t)$ is of the form $f(t) = \phi_{1}(t) + \phi_{2}(t)$ with $\phi_{1}(t)$ and $\phi_{2}(t)$ in $(\ref{phi1})$ and $(\ref{phi2})$, where $\phi_{2}$ possesses the $K_{2}$ nonzero Fourier coefficients $c_{n}(\phi_{2})$, $n \in \Sigma \subset {\mathbb N}$.
Assume that we have given a set ${c}_{n}(f)$, $n \in \Gamma \subset {\mathbb N}$ of $L\ge 2K+2$ Fourier coefficients of $f$,  where the unknown set $\Sigma$ is contained in  $\Gamma$. 
Then  $\phi_{1}$ and $\phi_{2}$, i.e., $K_{1}, \, K_{2}$ as well as all parameters $\gamma_{j}, \, a_{j}, \, b_{j}$  determining $\phi_{1}$ and $\phi_{2}$ can be completely recovered from this set of Fourier coefficients. In particular, 
Algorithm $\ref{alg1}$ terminates after at most $K+1$ steps and provides a rational function $r_{K_{1}}(z)$  of type $(K_{1}-1,K_{1})$ that interpolates all  $\tilde{c}_{n}(f) = \tilde{c}_{n}(\phi_{1})$ for all $n \in {\mathbb N} \setminus \Sigma$.
\end{theorem}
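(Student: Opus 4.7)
The plan is to extend the analysis of Theorem~\ref{theoexact} by isolating the effect of the $P$-periodic block $\phi_{2}$ on the least-squares matrix $\mathbf{A}_{J}$ used in Algorithm~\ref{alg1}. From (\ref{cnphi}) and (\ref{phi2c}) I first decompose $\tilde{c}_{n}(f) = \tilde{c}_{n}(\phi_{1}) + \tilde{c}_{n}(\phi_{2})$, where $\tilde{c}_{n}(\phi_{1}) = r_{K_{1}}(n^{2})$ is rational of type $(K_{1}-1,K_{1})$ for every $n \in {\mathbb N}$, while $\tilde{c}_{n}(\phi_{2})$ is supported only on the unknown set $\Sigma$. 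Outside $\Sigma$ the modified Fourier coefficients of $f$ therefore already look like those of a pure non-$P$-periodic signal with $K_{1}$ terms, and the obstruction to the AAA iteration is concentrated on the $K_{2}$ ``outlier'' indices of~$\Sigma$.

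The central step is a rank analysis of $\mathbf{A}_{J}$ at the iteration where $S_{J} = \{m_{1}, \ldots , m_{K_{1}+1}\} \cup \Sigma$ with $\{m_{i}\} \subset {\mathbb N} \setminus \Sigma$, so that $|S_{J}| = K+1$. Splitting the columns according to whether $n_{j} \in \{m_{i}\}$ or $n_{j} \in \Sigma$, only the $\Sigma$-columns carry the additional summand $-\tilde{c}_{n_{j}}(\phi_{2})/(n^{2}-n_{j}^{2})$ coming from the periodic defect, giving $\mathbf{A}_{J} = \tilde{\mathbf{A}}_{J} - \mathbf{B}$. Here $\tilde{\mathbf{A}}_{J}$ admits the Cauchy--diagonal--Cauchy factorization from the proof of Theorem~\ref{theoexact} applied to $\phi_{1}$ and therefore has rank $K_{1}$, whereas $\mathbf{B}$ is supported on the $K_{2}$ columns indexed by $\Sigma$ and hence has rank at most $K_{2}$. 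Consequently $\rank\, \mathbf{A}_{J} \le K$ and $\mathbf{A}_{J}$ has a non-trivial kernel. I would then construct the kernel vector $\mathbf{w}^{*}$ explicitly by setting $w_{j}^{*} = 0$ for $n_{j} \in \Sigma$ and letting $(w_{m_{i}}^{*})_{i=1}^{K_{1}+1}$ be the Theorem~\ref{theoexact} weight vector for $\phi_{1}$ on the nodes $\{m_{1}, \ldots , m_{K_{1}+1}\}$. Since the nonzero columns of $\mathbf{B}$ are annihilated by the zero components of $\mathbf{w}^{*}$, and the $\{m_{i}\}$-block of $\tilde{\mathbf{A}}_{J}$ coincides with the matrix from Theorem~\ref{theoexact} for $\phi_{1}$, one obtains $\mathbf{A}_{J}\, \mathbf{w}^{*} = \mathbf{0}$ together with the side condition $(\mathbf{w}^{*})^{T} \tilde{\mathbf{c}}_{S_{J}} = \sum_{i} w_{m_{i}}^{*}\, \tilde{c}_{m_{i}}(\phi_{1}) = 0$. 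A non-degeneracy argument for Cauchy matrices, analogous to the one in the proof of Theorem~\ref{theoexact}, shows that this kernel is one-dimensional, so the modified AAA algorithm selects $\mathbf{w} \propto \mathbf{w}^{*}$ and the resulting barycentric expression collapses to the desired $r_{K_{1}}(z)$ after the $K_{2}$ vanishing weights are discarded.

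The main obstacle is justifying that the greedy maximum-error rule actually steers the algorithm into a configuration with $\Sigma \subset S_{J}$ within $K+1$ iterations. My plan is an indirect argument: by the uniqueness part of Theorem~\ref{theoexact}, any rational approximant that matches $\tilde{c}_{n}(f) = \tilde{c}_{n}(\phi_{1})$ on sufficiently many indices of ${\mathbb N} \setminus \Sigma$ must coincide with $r_{K_{1}}$, and this function then misses every unselected $n \in \Sigma$ by the nonzero defect $\tilde{c}_{n}(\phi_{2})$ given in (\ref{phi2c}). Hence the termination criterion cannot be met while $\Sigma \not\subset S_{J}$, which forces the algorithm to absorb all $K_{2}$ periodic indices into the interpolation set. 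Combined with the requirement of at least $K_{1}+1$ non-$\Sigma$ nodes needed to pin down $r_{K_{1}}$, this yields $|S_{J}| \ge K+1$ at termination, matching the bound claimed.

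For the postprocessing, once Algorithm~\ref{alg1} outputs $r_{K_{1}}$ together with $S_{J}$ and $\mathbf{w}$, the $K_{2}$ indices $n_{j}$ with $w_{j} = 0$ identify $\Sigma$ and yield the periodic frequencies $a_{j} = n_{j}/P$. Applying Algorithm~\ref{alg2} and Theorem~\ref{theo3} to $r_{K_{1}}$ then recovers all parameters $\gamma_{j}, a_{j}, b_{j}$ of $\phi_{1}$. Finally, for each $n_{j} \in \Sigma$ the Fourier coefficient $c_{n_{j}}(\phi_{1})$ is now computable from the recovered $\phi_{1}$, so subtracting it from the given $c_{n_{j}}(f)$ isolates $c_{n_{j}}(\phi_{2})$; the amplitude $\gamma_{j}$ and phase $b_{j}$ of the corresponding $P$-periodic atom follow via the degenerate-case formula in Theorem~\ref{theo2}. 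This recovers both $K_{1}, K_{2}$ and all parameters defining $f$.
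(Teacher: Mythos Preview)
Your rank analysis under the assumption $\Sigma \subset S_J$ with $|S_J|=K+1$ is essentially sound, but the argument that the greedy rule actually drives the algorithm into this configuration is where the proof breaks down. The claim ``the termination criterion cannot be met while $\Sigma \not\subset S_J$, which forces the algorithm to absorb all $K_2$ periodic indices'' is a non sequitur: failure to terminate only means that \emph{some} further index is added, and nothing in the maximum-error rule guarantees that index lies in $\Sigma$. Moreover, the conclusion you draw, $|S_J|\ge K+1$ at termination, is a \emph{lower} bound on the number of interpolation points, whereas the theorem asserts an \emph{upper} bound (at most $K+1$ steps, i.e.\ $|S|\le K+2$). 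So even granting your termination dichotomy, you have not proven what is claimed.

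The paper's proof sidesteps this difficulty entirely by \emph{not} assuming $\Sigma\subset S$. It works one step later, with an \emph{arbitrary} interpolation set $S_{K+2}$ of size $K+2$, and lets $\Sigma=\Sigma'\cup\Sigma''$ be split between $S_{K+2}$ and $\Gamma_{K+2}$. A $2\times 2$ block decomposition of $\mathbf A_{K+2}$ (rows and columns separated according to membership in $\Sigma$) is then analyzed: the $(1,1)$-block carries only $\phi_1$-data and has rank $K_1$ by the Cauchy--diagonal--Cauchy factorization, while augmenting by the remaining blocks raises the rank to exactly $K_1+K_2'$ (columnwise) and $K_1+K_2''$ (rowwise), yielding $\mathrm{rank}\,\mathbf A_{K+2}=K$. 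Hence the kernel has dimension \emph{two}, which is precisely what the algorithm's weight formula (\ref{w}) needs in order to produce a kernel vector satisfying the side condition --- without any hypothesis on which indices the greedy rule happened to select. That kernel vector is then shown to vanish on the $\Sigma'$-columns and to reproduce $r_{K_1}$ on the remaining block. The extra interpolation point (size $K+2$ rather than your $K+1$) is exactly what buys the two-dimensional kernel and makes the greedy-tracking argument unnecessary.

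A smaller point: your post-processing identifies $\Sigma$ via the vanishing weight components, but this only recovers $\Sigma\cap S_J$. The paper instead computes $\tilde c_n(f)-r_{K_1}(n^2)$ for all $n\in\Gamma$, which locates \emph{all} of $\Sigma$ even when some periodic indices were never selected for interpolation.
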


\begin{proof}
Let as before $\tilde{c}_{n}:= \mathrm{Re} \, c_{n}(f) + \frac{{\mathrm i}}{n} \, \textrm{Im} \, c_{n}(f)$.
Assume that we have taken a set $S_{K+2} \subset \Gamma$ of $K+2$ given indices as interpolation points  at the $(K+1)$-th iteration step of Algorithm \ref{alg1}. We will show that the matrix ${\mathbf A}_{K+2} = \left( \frac{\tilde{c}_{n} - \tilde{c}_{k}}{n^{2} - k^{2}} \right)_{n \in \Gamma_{K+2}, k \in S_{K+2}} \in {\mathbb C}^{L-K-2 \times K+2}$ has rank $K$ and possesses  a kernel vector  ${\mathbf w} \in {\mathbb C}^{K+2}$  that satisfies the side condition  ${\mathbf w}^{T} \tilde{\mathbf c}_{S_{K+2}} = 0$. Hence, we can show that the weight vector ${\mathbf w}$ found in Algorithm \ref{alg1} determines  the rational function  $r_{K_{1}}(z)$  which  interpolates $\tilde{c}_{n}(\phi_{1})$ for all $n \in {\mathbb N}$.

1. Let $\Sigma' \cup \Sigma'' = \Sigma$ with $\Sigma' \cap \Sigma'' = \emptyset$  be the partition of $\Sigma$ such that $\Sigma' \subset S_{K+2}$ and $\Sigma'' \subset \Gamma_{K+2}$, and let $K_{2}' := \# \Sigma'$ and $K_{2}'' := \# \Sigma''$ denote the numbers of elements of $\Sigma'$ and $\Sigma''$, such that $K_{1}+ K_{2}' + K_{2}'' = K$.
Then the $K+2-K_{2}'$  indices in $S_{K+2} \setminus \Sigma'$ correspond to  modified Fourier coefficients with the rational structure $\tilde{c}_{n}(f) = \tilde{c}_{n}(\phi_{1}) = \frac{p_{K_{1}-1}(n^{2})}{q_{K_{1}}(n^{2})} = r_{K_{1}}(n^{2})$ as in (\ref{cnphi}), and the same is true for the $L-K-2-K_{2}''$ indices in  $\Gamma_{K+2} \setminus \Sigma''$.

Assume that the rows and columns of the matrix ${\mathbf A}_{K+2}$ are ordered such that the first $K+2-K_{2}'$ columns of ${\mathbf A}_{K+2}$  correspond  to  $S_{K+2}\setminus \Sigma'$, while the last $K_{2}'$ columns  correspond to the index set $\Sigma'$. Similarly,  we suppose that  the rows of ${\mathbf A}_{K+2}$ are ordered  such that  the first  $L-K-2-K_{2}''$ rows  correspond to the indices  $\Gamma_{K+2} \setminus \Sigma''
$, while the remaining  $K_{2}''$ rows  correspond to  indices in $\Sigma''$.  In other words, we  obtain 
$$ {\mathbf A}_{K+2} = \left( \begin{array}{ll} {\mathbf A}_{11} & {\mathbf A}_{12} \\
{\mathbf A}_{21} & {\mathbf A}_{22} \end{array} \right) $$
with 
\begin{eqnarray*}
{\mathbf A}_{11} &=& \left( \frac{\tilde{c}_{n}(\phi_{1}) - \tilde{c}_{k}(\phi_{1})}{n^{2} - k^{2}} \right)_{n \in \Gamma_{K+2} \setminus \Sigma'',\,  k \in S_{K+2} \setminus \Sigma'} \in {\mathbb C}^{(L-K-2-K_{2}'') \times (K+2-K_{2}')}, \\
{\mathbf A}_{12} &=& \left( \frac{\tilde{c}_{n}(\phi_{1}) - \tilde{c}_{k'}(\phi_{1})}{n^{2} - (k')^{2}} - \frac{\tilde{c}_{k'}(\phi_{2})}{n^{2}- (k')^{2}}\right)_{n \in \Gamma_{K+2} \setminus \Sigma'',\,  k' \in \Sigma'} \in {\mathbb C}^{(L-K-2-K_{2}'') \times K_{2}'}, \\
{\mathbf A}_{21} &=& \left( \frac{\tilde{c}_{n'}(\phi_{1}) - \tilde{c}_{k}(\phi_{1})}{(n')^{2} - k^{2}} + \frac{\tilde{c}_{n'}(\phi_{2})}{(n')^{2}- k^{2}}\right)_{n' \in  \Sigma'',\,  k \in S_{K+2} \setminus \Sigma'} \in {\mathbb C}^{K_{2}'' \times (K+2-K_{2}')}, \\
{\mathbf A}_{22} &=& \left( \frac{\tilde{c}_{n'}(\phi_{1}) - \tilde{c}_{k'}(\phi_{1})}{(n')^{2} - (k')^{2}} + \frac{\tilde{c}_{n'}(\phi_{2})- \tilde{c}_{k'}(\phi_{2})}{(n')^{2}- (k')^{2}}\right)_{n' \in  \Sigma'', \, k' \in \Sigma'} \in {\mathbb C}^{K_{2}'' \times K_{2}'}. 
\end{eqnarray*}
Since ${\mathbf A}_{11}$ is only composed of the modified Fourier coefficients  of the non-periodic function $\phi_{1}$, it follows similarly as in the proof of Theorem \ref{theoexact} that ${\mathbf A}_{11}$ possesses rank $K_{1}$. More exactly, we have with (\ref{cnphi}) the matrix factorization  
$$ \textstyle {\mathbf A}_{11} = \left( \frac{1}{n^{2}- C_{j}} \right)_{n\in \Gamma_{K+2} \setminus \Sigma'', \, j=1,\ldots, K_{1}}  \mathrm{diag} \left( \!\Big( -(A_{j} + {\mathrm i} B_{j}) \Big)_{j=1}^{K_{1}} \!\right) \, 
 \left( \frac{1}{k^{2}- C_{j}} \right)_{j=1, \ldots , K_{1}, k \in S_{K+2} \setminus \Sigma'}, $$
 where the two Cauchy matrices and the diagonal matrix have full rank $K_{1}$.  
Therefore,  ${\mathbf A}_{11}$ possesses a kernel of dimension $K+2-K_{1}-K_{2}' = K_{2}''+2$. 
Since  ${\mathbf A}_{21}$ contains only $K_{2}''
\le K_{2}$ rows, it follows that $\left( \begin{array}{ll} \!\! {\mathbf A}_{11}\!\!\\ \!\!{\mathbf A}_{21} \!\!\end{array} \right)$ has at most rank $K_{1}+K_{2}''$ and therefore possesses a kernel of dimension at least $2$.
Thus, ${\mathbf A}_{K+2}$ has at most rank $K_{1}+ K_{2}''+ K_{2}'= K$.  

2. 
We will prove that ${\mathbf A}_{K+2}$  has exactly rank $K$ by showing that 
rank $({\mathbf A}_{11}, {\mathbf A}_{12}) = K_{1}+K_{2}'$
and similarly, 
that rank $\left(\begin{array}{l} \!\!{\mathbf A}_{11} \!\!\\ \!\!{\mathbf A}_{21}\!\! \end{array} \right) = K_{1}+K_{2}''$.
As in the proof of Theorem 4.1, we can  always find a linear combination  of $K_{1}$ columns  of ${\mathbf A}_{11}$  to represent the columns $\Big( \frac{\tilde{c}_{n}(\phi_{1}) - \tilde{c}_{k'}(\phi_{1})}{n^{2} - k'^{2}} \Big)_{n \in \Gamma_{K+2} \setminus \Sigma''}$ for all $k' \in \Sigma'$.  
Indeed for each of these columns we have
\begin{eqnarray*}
\textstyle \Big(  \frac{\tilde{c}_{n}(\phi_{1}) - \tilde{c}_{k'}(\phi_{1})}{n^{2}- k'^{2}} \Big)_{n \in \Gamma_{K+2} \setminus \Sigma''} 
\!\!\!\!&=& \!\!\!\!\textstyle \Big( \frac{1}{n^{2}- C_{j}} \Big)_{n\in \Gamma_{K+2} \setminus \Sigma'',  j=1,\ldots, K_{1}}  \mathrm{diag} \Big(\!\!-\!(A_{j} \!+\! {\mathrm i} B_{j}) \Big)_{j=1}^{K_{1}} 
 \Big( \frac{1}{k'^{2}- C_{j}} \Big)_{j=1}^{K_{1}}, 
 \end{eqnarray*}
 where $\left( \frac{1}{k'^{2}- C_{j}} \right)_{j=1}^{K_{1}}$ can be written as linear combination of the columns in \newline
 $ \left( \frac{1}{k^{2}- C_{j}} \right)_{j=1, \ldots , K_{1}, k \in S_{K+2} \setminus \Sigma'}$ which generate ${\mathbb R}^{K_{1}}$. 
Thus, 
$$ \mathrm{rank} \, \left( {\mathbf A}_{11}, \, {\mathbf A}_{12} \right) = \mathrm{rank} \, \left( {\mathbf A}_{11}, \, -\left(\frac{\tilde{c}_{k'}(\phi_{2})}{n^{2}- k'^{2}}\right)_{n \in \Gamma_{K+2} \setminus \Sigma'',\,  k' \in \Sigma'} \right), $$
where
$$ \left(  \frac{\tilde{c}_{k'}(\phi_{2})}{n^{2}- k'^{2}}\right)_{n \in \Gamma_{K+2} \setminus \Sigma'', k' \in \Sigma'}
= \left(  \frac{1}{n^{2}- k'^{2}}\right)_{n \in \Gamma_{K+2} \setminus \Sigma'', k' \in \Sigma'} \, \mathrm{diag} \, \left( \tilde{c}_{k'}(\phi_{2}) \right)_{k' \in \Sigma'} , $$
and  $ \tilde{c}_{k'}(\phi_{2}) \neq 0$ for  $k' \in \Sigma'$.
Therefore, it suffices to show that the concatenation of the first matrix factor of ${\mathbf A}_{11}$ and 
$\left( \frac{1}{n^{2}- k'^{2}} \right)_{n \in \Gamma_{K+2} \setminus \Sigma'', k' \in \Sigma'}$,  i.e., 
$$ \left( \left(\frac{1}{n^{2}- C_{j}} \right)_{n\in \Gamma_{K+2} \setminus \Sigma'', \, j=1,\ldots, K_{1}} , \left( \frac{1}{n^{2}- k'^{2}} \right)_{n \in \Gamma_{K+2} \setminus \Sigma'', k' \in \Sigma'} \right)
$$
has full rank $K_{1}+ K_{2}'$. This is obviously true since this Cauchy matrix has $L-K-2-K_{2}'' \ge K-K_{2}''=K_{1}+K_{2}'$ rows and the values $C_{j}$, $j=1, \ldots , K_{1}$ and $k'^{2}$, $k'  \in \Sigma'$ are pairwise distinct and also distinct from $n^{2}$ with $n  \in \Gamma_{K+2}\setminus \Sigma''$.
Similarly we can show that rank $\left({\mathbf A}_{11}^{T}, \, {\mathbf A}_{21}^{T} \right)$  can be simplified to 
$$ \mathrm{rank}  \left( {\mathbf A}_{11}^{T}, \, \left(  \frac{\tilde{c}_{n'}(\phi_{2})}{n'^{2}- k^{2}}\right)_{n' \in  \Sigma'', k \in S_{K+2} \setminus \Sigma'}^{T} \right)
$$
and has rank $K_{1}+K_{2}''$.

4. Thus rank ${\mathbf A}_{K+2} = K$, i.e., the dimension of the kernel of ${\mathbf A}_{K+2}$ is $2$.
Therefore, Algorithm \ref{alg1}  always finds  a vector ${\mathbf w}$ in the kernel of ${\mathbf A}_{K+2}$  which satisfies  also the side condition ${\mathbf w}^{T} \tilde{\mathbf c}_{S_{K+2}} =0$.
Moreover, it follows from the previous observations that any vector ${\mathbf w}$ in the kernel of  ${\mathbf A}_{K+2}$, is of the form 
$ {\mathbf w} = (\tilde{\mathbf w}^{T}, {\mathbf 0}^{T}) \in {\mathbb C}^{K+1}$, where  $\tilde{\mathbf w} \in {\mathbb C}^{K+2-K_{2}'}$ is in the kernel of $\left( \begin{array}{c} \!\!\!{\mathbf A}_{11} \!\!\!\\ \!\!\!{\mathbf A}_{21} \!\!\!\end{array} \right)$
and particularly in the kernel of ${\mathbf A}_{11}$. Thus, it follows from Theorem \ref{theoexact}  that $\tilde{\mathbf w}$  has at least $K_{1}+1$ nonzero components and provides  the rational  function  $r_{K_{1}}(z)$ that interpolates  all modified Fourier coefficients of $\phi_{1}$.  However, differently from the proof of  Theorem \ref{theoexact}, $\tilde{\mathbf w}$ may possess more than $K_{1}+1$  nonzero components, and the computation of $r_{K_{1}}(z)$ may involve the removal of Froissart doublets. 

5. 
Having  determined  $r_{K_{1}}(z)$ to interpolate all  modified Fourier coefficients of  $\phi_{1}$, we can find  $\phi_{2}$ of the form (\ref{phi2}) by capturing  all modified Fourier coefficients $\tilde{c}_{n}(f)$  with $\tilde{c}_{n}(f) \neq \tilde{c}_{n}(\phi_{1})$, $n  \in \Gamma$.
For all $n \in \Gamma$, we compute $\tilde{c}_{n}(\phi_{2}) = \tilde{c}_{n}(f) - r_{K_{1}}(n^{2})$.
Then $\phi_{2}$ can be reconstructed from (\ref{phi2}) and (\ref{phi2c}), where $\Sigma$ is found as the set of indices $n \in \Gamma$ with $\tilde{c}_{n}(\phi_{2}) \neq 0$, and $K_{2}= \# \Sigma$.
\end{proof}

\begin{remark}
We can also reconstruct $\tilde{f}(t) = \gamma_{0} + f(t) = \gamma_{0} + \sum_{j=1}^{K} f_{j}(t)$, where $\gamma_{0} \neq 0$ is a constant. Then $\gamma_{0}$ can be  seen as a periodic component of the function, and we have $c_{n}(\tilde{f}) = c_{n}(f)$ for all $n \neq 0$. Thus, if beside the set of Fourier coefficients $c_{n}(f)$, $n \in \Sigma$, in Theorem \ref{theoperiodic}
also  $c_{0}(\tilde{f})$ is known, then the constant part $\gamma_{0}$ can be reconstructed, too.
\end{remark}

\section{Generalization of the Model}
\label{sec:general}

The model (\ref{1.1}) for signals considered in the previous sections can be generalized.
Beside $(a_{j}, \, b_{j}) \in (0, \infty)  \times  [0, \, 2\pi)$ 
we now admit $ (a_{j}, \, b_{j}) \in {\mathrm i} (0, \infty) \times {\mathrm i} {\mathbb R}$ for $j \in \{1, \ldots , K\}$, i.e., the parameters $a_{j}$ and $b_{j}$ are complex with vanishing real part.
Observing that $\cos({\mathrm i} (2\pi a t + b) ) = \cosh(2\pi a t + b)$ for real numbers $a, \, b$,  we can consider the  generalized function model 
\begin{equation}\label{1.1a}
f(t) = \sum_{j=1}^{K} \, \gamma_j \, \cos(2 \pi a_j  t + b_j) = \sum_{j=1}^{\kappa} \, \gamma_j \, \cos(2 \pi a_j  t + b_j) + \sum_{j=\kappa+1}^{K} \, \, {\gamma}_j \, \cosh(2\pi \tilde{a}_j t + \tilde{b}_j)
\end{equation}
where $\kappa \in \{1, \ldots , K\}$,  $\gamma_j \in (0, \infty)$  for $j=1, \ldots , K$,  and $(a_j, \, b_{j}) \in (0, \infty) \times [0, 2\pi)$ 
for $j=1, \ldots , \kappa$, as well as $\tilde{a}_{j}= -{\mathrm i} \, a_{j} \in (0, \infty)$,  $\tilde{b}_{j} = -{\mathrm i} \, b_{j}\in {\mathbb R}$ for $j=\kappa+1, \ldots , K$. 
Here, we assume as before that $a_{j}$, $j=1,\ldots K$, are pairwise distinct. 
Model (\ref{1.1}) is obtained from (\ref{1.1a}) for  $\kappa=K$.
In particular, we obtain similarly to Theorem \ref{theo1} the uniqueness of the parameter representation  of $f$ in (\ref{1.1a}).
\begin{corollary}\label{coro1}
Let $f$ be given as in $(\ref{1.1a})$ with $K\in {\mathbb N}$, $\gamma_j \in (0, \infty)$, and $(a_j , \, b_j) \in (0, \infty) \times [0, 2\pi)$, or  $(a_j , \, b_j) \in {\mathrm i} (0, \, \infty) \times {\mathrm i} {\mathbb R}$,
where $a_{j}$ are pairwise distinct. Further, let 
$$ g(t) = \sum_{j=1}^{M} \delta_{j} \, \cos(2 \pi c_{j}  t + d_{j}) $$
with $M \in {\mathbb N}$, $\delta_j \in (0, \infty)$,  and 
$(c_j , \, d_j) \in (0, \infty) \times [0, 2\pi)$ or  $(c_j , \, d_j) \in {\mathrm i} (0, \, \infty) \times {\mathrm i} {\mathbb R}$,
where $c_{j}$, $j=1, \ldots , M$ are pairwise distinct. 
If $f(t) = g(t)$ for all $t$ on an interval $T \subset {\mathbb R}$ of positive length,  then we have 
$K=M$ and (after suitable permutation of the summands) $\gamma_{j}=\delta_{j}$, $a_{j}=c_{j}$, $b_{j}= d_{j}$ for $j=1, \ldots , K$.
\end{corollary}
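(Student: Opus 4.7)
The plan is to recycle the proof of Theorem \ref{theo1} almost verbatim, and only modify the steps where the (now possibly purely imaginary) phase parameters $y_j$ enter. I would start by forming $h(t)=f(t)-g(t)$ and writing it as
\[
 h(t)=\sum_{j=1}^{K+M}\mu_j\cos(2\pi x_j t+y_j),
\]
where each pair $(x_j,y_j)$ lies in $(0,\infty)\times[0,2\pi)$ or in $\mathrm{i}(0,\infty)\times\mathrm{i}\mathbb{R}$. The key geometric observation is that the cos-type frequencies sit on the positive real axis while the cosh-type frequencies sit on the positive imaginary axis, so after collecting coinciding frequencies the distinct values $\tilde x_\ell$ remain pairwise distinct, and so do the doubled sequence $\{\pm\tilde x_\ell\}$ (a positive real can never equal the negative of a purely imaginary number, and within each axis the assumption already gives distinctness).

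Once this is in place I would re-use Euler's identity — which is defined for complex arguments via the exponential — to rewrite $h(t)=\tfrac12\sum_{\ell=1}^{2L}\xi_\ell\,\mathrm{e}^{2\pi\mathrm{i}\tilde x_\ell t}$ with $\tilde x_{L+\ell}=-\tilde x_\ell$, and then invoke the Vandermonde/Wronskian computation from the proof of Theorem \ref{theo1} unchanged: that determinant argument is valid for \emph{any} pairwise distinct complex nodes $\tilde x_\ell$, so linear independence of the $2L$ exponentials on $T$ follows immediately, forcing all coefficients $\xi_\ell$ to vanish.

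The remaining work is the case analysis, and here a small adjustment is needed. If $\tilde x_\ell$ appears only once as some $x_j$, then $\mu_j\cos y_j=\mu_j\sin y_j=0$; in the cos-case this gives $\mu_j=0$ as before, while in the cosh-case $y_j=\mathrm{i}\tilde y_j$ yields $\cos y_j=\cosh\tilde y_j>0$, so again $\mu_j=0$, contradicting either $\gamma_j>0$ or $\delta_j>0$. Hence every $\tilde x_\ell$ occurs twice, and because real and purely imaginary frequencies cannot coincide, the two matched terms are always of the same type. The cos/cos match is identical to the proof of Theorem \ref{theo1}; in the new cosh/cosh match, with $y_{j_1}=\mathrm{i}\tilde b_{j_1}$ and $y_{K+j_2}=\mathrm{i}\tilde d_{j_2}$, the determinant condition (\ref{det1}) becomes $\mathrm{i}\sinh(\tilde b_{j_1}-\tilde d_{j_2})=0$, which by injectivity of $\sinh$ on $\mathbb{R}$ gives $\tilde b_{j_1}=\tilde d_{j_2}$ \emph{without} any modular ambiguity, and then $\alpha_\ell=0$ reduces to $(\gamma_{j_1}-\delta_{j_2})\cosh\tilde b_{j_1}=0$, forcing $\gamma_{j_1}=\delta_{j_2}$ since $\cosh$ is strictly positive.

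I do not expect any serious obstacle: the Vandermonde/Wronskian step was already written for complex nodes, so it transfers for free, and the main novelty is essentially cosmetic — replacing the $\sin$/$\cos$ arithmetic on $[0,2\pi)$ by the simpler $\sinh$/$\cosh$ arithmetic on $\mathbb{R}$, which in fact eliminates the ${\rm mod}\,\pi$ bookkeeping that complicated the real case. The only point requiring mild care is the disjointness of the real-axis and imaginary-axis frequency sets, which is what prevents a spurious ``mixed'' two-term collision and keeps the case analysis clean.
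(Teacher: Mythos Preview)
Your proposal is correct and follows essentially the same route the paper indicates: the paper does not give a detailed proof but simply states that Corollary \ref{coro1} is proved analogously to Theorem \ref{theo1}, using $\cos(\mathrm{i}x)=\cosh x$, $\sin(\mathrm{i}x)=\mathrm{i}\sinh x$, and the fact that $\sinh$ has only the zero $x=0$. Your write-up fills in precisely these details---the Wronskian/Vandermonde argument carried over to complex nodes, the disjointness of real and imaginary frequency axes, and the $\sinh$/$\cosh$ arithmetic replacing the $\bmod\,\pi$ bookkeeping---so it matches the paper's intended argument.
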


Corollary \ref{coro1} can be proved analogously to Theorem \ref{theo1}, using that for $x \in {\mathbb R}$ we have $\cos ( {\mathrm i} \, x) = \cosh x$ and $\sin({\mathrm i} \, x) = {\mathrm i} \, \sinh x$, where $\sinh$ is an odd function with only one zero $x=0$.
Moreover, we can generalize Theorem \ref{theo2}.
\begin{corollary}
Let $\phi(t) = \gamma \cos(2\pi {a} + {b}) = \gamma \cosh((-{\mathrm i} (2\pi {a} t + {b}))$ for $(a, \, b) \in {\mathrm i} (0, \, \infty) \times {\mathrm i} {\mathbb R}$, 
and $P>0$. Then $\phi$ possesses the Fourier series $\phi(t) = \sum\limits_{n \in {\mathbb Z}} c_{n}(\phi) \, {\mathrm e}^{2\pi {\mathrm i} nt/P}$ with
\begin{eqnarray*} \mathrm{Re} \, c_n(\phi)  &=&   \frac{\gamma |a| P}{\pi (P^2 |a|^2 + n^2)} \sinh(\pi |a| P) \,  \cosh((-{\mathrm i} (\pi a P + b)), \\
 \mathrm{Im} \, c_n(\phi) &=&  \frac{ \gamma n}{\pi(|a|^2 P^2 + n^2)} \sinh(\pi |a|  P) \, \sinh(-{\mathrm i}(\pi a  P + b)).
\end{eqnarray*}
In particular, the Fourier coefficients $c_{n}(\phi)$ do not vanish  for all $n \in {\mathbb N}$. 
\end{corollary}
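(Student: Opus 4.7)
The plan is to reduce the corollary to the already-proved Theorem \ref{theo2} via analytic continuation in the parameters $a$ and $b$. Write $a = \mathrm{i}\tilde{a}$ with $\tilde{a} = |a| > 0$ and $b = \mathrm{i}\tilde{b}$ with $\tilde{b} \in \mathbb{R}$. Using $\cos(\mathrm{i}x) = \cosh(x)$, one immediately sees that $\phi(t) = \gamma\cosh(2\pi\tilde{a}t + \tilde{b})$ is a real, smooth function on $[0,P]$, so its Fourier coefficients exist pointwise. The integrand $\gamma\cos(2\pi a t + b)\,\mathrm{e}^{-2\pi\mathrm{i}nt/P}$ is jointly holomorphic in $(a,b) \in \mathbb{C}^2$ (for any fixed $t$), and the formulas (\ref{Rc})--(\ref{Ic}) give rational expressions in $a$ with potential poles only at $aP = \pm n$. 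Since these poles are never hit for $a \in \mathrm{i}(0,\infty)$ and $n \in \mathbb{N}_0$, the identity theorem lets us extend (\ref{Rc})--(\ref{Ic}) from the real axis to the imaginary axis in $a$.

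Next I would substitute $a = \mathrm{i}|a|$, $b = \mathrm{i}\tilde{b}$ into the extended formulas and simplify using $\sin(\mathrm{i}x) = \mathrm{i}\sinh(x)$ and $\cos(\mathrm{i}x) = \cosh(x)$. The three building blocks are
\begin{align*}
a^2P^2 - n^2 &= -(|a|^2P^2 + n^2),\\
\sin(a\pi P) &= \mathrm{i}\sinh(|a|\pi P),\\
\cos(a\pi P + b) &= \cosh\bigl(-\mathrm{i}(\pi a P + b)\bigr),\qquad \sin(a\pi P + b) = \mathrm{i}\sinh\bigl(-\mathrm{i}(\pi a P + b)\bigr).
\end{align*}
Plugging into (\ref{Rc}) produces two factors of $\mathrm{i}$ in the numerator and a sign flip in the denominator, yielding the claimed formula for $\mathrm{Re}\,c_n(\phi)$; the analogous bookkeeping in (\ref{Ic}) gives $\mathrm{Im}\,c_n(\phi)$. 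As an alternative, one can avoid the continuation argument entirely by repeating the derivation of Theorem \ref{theo2} directly, splitting $\cosh(2\pi\tilde{a}t+\tilde{b}) = \tfrac12(\mathrm{e}^{2\pi\tilde{a}t+\tilde{b}} + \mathrm{e}^{-2\pi\tilde{a}t-\tilde{b}})$ and computing two elementary exponential integrals over $[0,P]$; both routes produce the same answer, but the continuation route is shorter.

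For the non-vanishing claim, observe that for $n \in \mathbb{N}$ one has $|a|^2P^2 + n^2 > 0$ and $\sinh(\pi|a|P) > 0$ since $|a|P > 0$. Moreover $-\mathrm{i}(\pi a P + b) = \pi|a|P + \tilde{b} \in \mathbb{R}$, so $\cosh(-\mathrm{i}(\pi a P + b)) > 0$. Hence $\mathrm{Re}\,c_n(\phi) > 0$ for every $n \in \mathbb{N}$, which in particular forces $c_n(\phi) \neq 0$. The only real care needed in the whole argument is the tracking of the imaginary units when reading (\ref{Rc})--(\ref{Ic}) off as expressions in complex $a,b$; there is no genuine analytic obstacle, since the hyperbolic growth of $\phi$ is bounded on the compact interval $[0,P]$ and the integrals converge absolutely. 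Thus I expect the proof to consist essentially of this substitution-and-simplification step, with the continuation argument (or equivalently, the direct integral evaluation) supplying the legitimacy of transporting the Theorem~\ref{theo2} formulas to the imaginary-axis regime.
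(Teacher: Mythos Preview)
Your proposal is correct and matches the paper's (implicit) approach: the paper states this corollary without a separate proof, treating it as the direct generalization of Theorem~\ref{theo2} obtained by invoking $\cos(\mathrm{i}x)=\cosh x$ and $\sin(\mathrm{i}x)=\mathrm{i}\sinh x$, which is exactly your substitution step. Your analytic-continuation framing is a slightly more careful justification of why the formulas (\ref{Rc})--(\ref{Ic}) may be read off for purely imaginary $a,b$ and still yield the real and imaginary parts of $c_n(\phi)$, and your non-vanishing argument via $\mathrm{Re}\,c_n(\phi)>0$ is precisely what the paper has in mind.
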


Thus, the Fourier coefficients of the signal in model (\ref{1.1a}) have still the same structure as found for the model (\ref{1.1}) in Section \ref{sec:rat}.
More precisely, for $f_{j}(t) = \gamma_{j}\cos(2\pi a_{j} t + b_{j})$ with $(a_{j}, \, b_{j}) \in {\mathrm i} (0, \, \infty) \times {\mathrm i} {\mathbb R}$, we also have 
$$ c_{n}(f_{j}) = \frac{A_{j} + {\mathrm i} B_{j} n}{n^{2}-C_{j}},$$
with 
\begin{eqnarray*} C_{j} &=& -|a_{j}|^{2} P^{2} = a_{j}^{2} P^{2}, \\
A_{j} &=& \frac{\gamma_{j} |a_{j}| P}{\pi} \sinh(\pi |a_{j}| P) \cosh((-{\mathrm i}(\pi a_{j} P + b_{j}|)) = 
-\frac{\gamma_{j} a_{j} P}{\pi} \sin(\pi a_{j} P) \cos(\pi a_{j} P + b_{j}), \\
B_{j} &=& \frac{\gamma_{j}}{\pi} \sinh( \pi |a_{j}| P) \, \sinh(-{\mathrm i} (\pi a_{j} P + b_{j})) = -\frac{\gamma_{j}}{\pi} \sin( \pi a_{j} P) \, \sin(\pi a_{j} P + b_{j}). 
\end{eqnarray*}
Hence, the obtained parameters $A_{j}$, $B_{j}$, $C_{j}$ have exactly the same form as in (\ref{cj})--(\ref{bj}). Moreover, we can reconstruct the parameters $\gamma_{j}, \, a_{j}, \, b_{j}$ of $f$ in (\ref{1.1a}) from $A_{j}$, $B_{j}$, $C_{j}$ via a generalization of Theorem \ref{theo3}.

\begin{corollary}\label{coro3}
Let $f(t)$ be given as in $(\ref{1.1a})$ with $\gamma_{j} \in (0, \infty)$ and 
with either $(a_j, \, b_{j})  \in  (0, \infty) \times [0, \, 2\pi)$ with $a_{j} \not\in \frac{1}{P}{\mathbb N}$ or 
$(a_j, \, b_{j})  \in {\mathrm i} (0, \infty) \times {\mathrm i} {\mathbb R}$. 
Then, there is a bijection between the parameters $\gamma_j, \, a_j, \, b_j$, $j=1, \ldots , K$, determining $f(t)$ 
and the parameters $A_j,\,  B_j, \, C_j$ in $(\ref{cj})-(\ref{bj})$, for $j=1, \ldots , K$.  
For $C_{j} >0$ we obtain $a_{j}, \, \gamma_{j}, \, b_{j}$ via Theorem $\ref{theo3}$.
For $C_{j} < 0$, we find
\begin{eqnarray*}
 a_j &=& \frac{1}{P} {\mathrm i} \sqrt{|C_{j}|}, \qquad 
 \gamma_j = \frac{\pi}{\sqrt{|C_j|} \, \sinh(\sqrt{|C_{j}|} \pi)} \sqrt{A_j^2+C_{j} B_j^2},\\
 b_j &= & {\mathrm i} \left(  -(\mathrm{sign}\, B_{j}) \, \mathrm{arccosh} \, \left( \frac{A_{j}}{\sqrt{A_{j}^{2} + C_{j} B_{j}^{2}}} \right)  - \sqrt{|C_{j}|} \pi \right).
 \end{eqnarray*}
\end{corollary}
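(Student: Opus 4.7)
The plan is to prove Corollary \ref{coro3} by splitting into the two cases $C_j > 0$ and $C_j < 0$. The first case, in which $(a_j, b_j) \in (0,\infty) \times [0,2\pi)$, is precisely the content of Theorem \ref{theo3} and requires no additional argument. For the new hyperbolic case $C_j < 0$, corresponding to $(a_j, b_j) \in {\mathrm i}(0,\infty) \times {\mathrm i}{\mathbb R}$, my strategy is to substitute $a_j = {\mathrm i}\tilde a_j$ with $\tilde a_j > 0$ and $b_j = {\mathrm i}\tilde b_j$ with $\tilde b_j \in {\mathbb R}$ into the defining identities (\ref{cj})--(\ref{bj}) and invert, exploiting the elementary relations $\sin({\mathrm i}x) = {\mathrm i}\sinh x$ and $\cos({\mathrm i}x) = \cosh x$.

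The recovery of $a_j$ is immediate, since $C_j = a_j^2 P^2 = -\tilde a_j^2 P^2 < 0$ gives $\tilde a_j = \sqrt{|C_j|}/P$ and hence $a_j = \frac{{\mathrm i}}{P}\sqrt{|C_j|}$. After substitution into (\ref{aj}) and (\ref{bj}), the imaginary units cancel and I obtain
\begin{align*}
A_j &= \tfrac{\gamma_j\tilde a_j P}{\pi}\,\sinh(\pi\tilde a_j P)\,\cosh(\pi\tilde a_j P + \tilde b_j),\\
B_j &= \tfrac{\gamma_j}{\pi}\,\sinh(\pi\tilde a_j P)\,\sinh(\pi\tilde a_j P + \tilde b_j),
\end{align*}
which are precisely the intermediate formulas displayed in the excerpt before Corollary \ref{coro3}. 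Applying the hyperbolic Pythagorean identity $\cosh^2 - \sinh^2 = 1$ then yields
\[
A_j^2 + C_j B_j^2 \;=\; A_j^2 - \tilde a_j^2 P^2\, B_j^2 \;=\; \Bigl(\tfrac{\gamma_j\tilde a_j P}{\pi}\sinh(\pi\tilde a_j P)\Bigr)^{\!2},
\]
and extracting the positive square root (all factors being positive) gives the claimed formula for $\gamma_j$.

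It remains to reconstruct $\tilde b_j$, which is the only subtle step. Since $\cosh$ is strictly positive and $\gamma_j, \tilde a_j, P > 0$, the quantity $A_j$ is necessarily positive, and the quotient $A_j/\sqrt{A_j^2 + C_j B_j^2} = \cosh(\pi \tilde a_j P + \tilde b_j)$ determines $\pi \tilde a_j P + \tilde b_j$ only up to an overall sign, since $\mathrm{arccosh}$ maps into $[0,\infty)$ only. The main (and only) obstacle is to resolve this sign ambiguity, which is why $\mathrm{sign}\,B_j$ enters the stated formula: because $\sinh$ is odd with unique real zero at $0$, one has $\mathrm{sign}(B_j) = \mathrm{sign}(\sinh(\pi\tilde a_j P + \tilde b_j)) = \mathrm{sign}(\pi\tilde a_j P + \tilde b_j)$, which selects the correct branch. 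Inverting and subtracting $\pi\tilde a_j P = \pi\sqrt{|C_j|}$ gives $\tilde b_j$, and hence $b_j = {\mathrm i}\tilde b_j$ in the form claimed. The case $B_j = 0$ forces $\tilde b_j = -\pi\sqrt{|C_j|}$, where both branches coincide and no sign has to be chosen. Since the whole construction inverts the forward map $(\gamma_j, a_j, b_j) \mapsto (A_j, B_j, C_j)$ of (\ref{cj})--(\ref{bj}) on each of the two parameter regimes, and the regimes are distinguished precisely by the sign of $C_j$, the bijection asserted in the corollary follows.
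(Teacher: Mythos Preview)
Your proposal is correct and follows essentially the same approach as the paper's own proof: both invert $C_j$ to recover $a_j$, combine (\ref{aj}) and (\ref{bj}) via the hyperbolic Pythagorean identity to obtain $\gamma_j$, observe that $A_j>0$ and that $A_j/\sqrt{A_j^2+C_jB_j^2}=\cosh(\pi\tilde a_jP+\tilde b_j)$, and then resolve the sign ambiguity of $\mathrm{arccosh}$ using $\mathrm{sign}\,B_j=\mathrm{sign}(\pi\tilde a_jP+\tilde b_j)$. Your treatment is in fact slightly more explicit than the paper's (you spell out the substitution $a_j=\mathrm{i}\tilde a_j$, $b_j=\mathrm{i}\tilde b_j$ and the degenerate case $B_j=0$), but the argument is the same.
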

\begin{proof}
For $C_{j} < 0$ it follows that  $a_j= \textrm{i} \frac{\sqrt{|C_j|}}{P}$.  Further, we obtain  
$$ \gamma_{j}^{2} = \frac{\pi^{2}}{|C_{j}| \left(\sinh(\sqrt{|C_{j}|} \pi)\right)^{2}} (A_{j}^{2} + C_{j} B_{j}^{2}). $$
The parameter $\gamma_{j}>0$ is thus uniquely defined, since we always have $A_{j}^{2} + C_{j} B_{j}^{2} >0$. Finally, inserting the found parameters $\gamma_{j}$ and $a_{j}$ into  (\ref{aj}) and (\ref{bj}), we obtain  
$$ |\sqrt{|C_{j}|}\pi - {\mathrm i} b_{j}| =  \textrm{arccosh} \, \left( \frac{A_{j}}{\sqrt{A_{j}^{2} + C_{j} B_{j}^{2}}} \right) $$ 
and 
$ \textrm{sign} (\sqrt{|C_{j}|} \pi - {\mathrm i} b_{j}) =  \textrm{sign} \,  B_{j}$, where $\mathrm{arccosh}$  is the inverse of $\cosh$  and maps onto $[0,\infty)$. Note that for $C_{j} <0$, we  necessarily have $A_{j}  >0$.
\end{proof}

Therefore, Algorithm \ref{alg1} can   also be  applied to a set of Fourier coefficients of the generalized model (\ref{1.1a}) to obtain a rational function that approximates the Fourier coefficients of the non-periodic part of $f$.
Then, we apply Algorithm \ref{alg2} as before to find the partial fraction decomposition of the rational function as described in Section \ref{sec:partial}, and  can reconstruct the wanted parameters for the nonperiodic part of $f$  in (\ref{1.1a}) using  Corollary \ref{coro3}. Finally, if $f$ contains a periodic part $\phi_{2}$ as studied in Section \ref{sec:periodic}, i.e., if there are parameters $a_{j} \in \frac{1}{P} {\mathbb N}$, then $\phi_{2}$ can be reconstructed via Theorem \ref{theoperiodic}.

\begin{remark}
The model (\ref{1.1a})  for real non-periodic functions $f$ is the most general model, such that Fourier coefficients of $f$ can be written as in (\ref{cnrat}). 
In particular, complex values for $C_{j}$ cannot occur in (\ref{cnrat}) for real functions, since we always have $c_{-n}(f) = \overline{c_{n}(f)} $ for $n \in {\mathbb N}$.
\end{remark}


\section{Numerical Experiments}

In this section we present some numerical experiments, which show that the considered reconstruction scheme provides very good reconstruction results  even for small frequency gaps, if $P$ is chosen suitably.
In the first example, we start with the signal from \cite{CMW16},
\begin{eqnarray}
\nonumber
 f(t) &=& \cos(2\pi (5 t)) + \cos(2\pi (4.9t)) + 2 \cos(2\pi t) + \cos(2\pi(0.96t)) \\
 \label{bsp1}
  & & + \cos(2\pi(0.92t)) +\cos(2\pi(0.9t)). 
\end{eqnarray}  
According to our model (\ref{1.1}), $f(t)$ is given by the parameter vectors 
$$ {\mathbf a} = (5.0, \, 4.9, \, 1.0, 0.96, 0.92, 0.9), \quad  {\mathbf b}= (0, \, 0, \, 0, \, 0, \, 0, \, 0),  \quad \gamra=(1, \, 1, \, 2, \, 1, \, 1, \, 1).$$
In \cite{CMW16}, this function has been considered in the interval $[0,20)$. But in this interval already $4$ of the $6$ terms are periodic.
We consider 
$f(t)$ first in the interval  $[0,4)$, i.e., we take $P=4$, see Figure \ref{fig:bsp1} (left).
Then, the signal has a periodic part $\phi_{2}(t) = \cos(2\pi (5 t)) +  2 \cos(2\pi t)$, while 
$\phi_{1}(t) = \cos(2\pi (4.9t))  + \cos(2\pi(0.96t)) + \cos(2\pi(0.92t)) +\cos(2\pi(0.9t))$ is non-$P$-periodic.
We want to reconstruct  $f(t)$ using the  Fourier coefficients $c_{n}(f)$ for $n=1, \ldots , 20$.
We apply Algorithm \ref{alg1} with $tol=10^{-13}$. \\
Algorithm \ref{alg1} starts with the initialization values of largest magnitude $c_{4}(f)$, $c_{20}(f)$. Then the algorithm takes the further interpolation points $c_{3}(f)$ , $c_{19}(f)$, $c_{5}(f)$, $c_{18}(f)$, $c_{1}(f)$ (in this order) before it stops after $6$ iteration steps with error $3.9 \cdot 10^{-17}$.
The first two terms of ${\mathbf w} \in {\mathbb C}^{7}$ vanish, indicating that $c_{4}(f)$ and $c_{20}(f)$ are not interpolated by the obtained rational function $p(t)$. Indeed, for $a_{1}=5$ and $a_{3}=1$, we have that $a_{1}P = 20$ and $a_{3}P = 4$ are integers and therefore $c_{20}(f)$ and $c_{4}(f)$ contain information about the periodic part of $f(t)$. After omitting these two terms in ${\mathbf w}$ and in the corresponding index vector ${\mathbf S}$, we get $r(z)$ of order $(3,4)$ of the form 
(\ref{rat0}) with 
$$ 
{\mathbf S} = \left( \begin{array}{r}
3\\
19 \\
5\\
18\\
1 \end{array} \right) , \qquad 
{\mathbf w} = \left( \begin{array}{r}
0.0015811774 +  0.0002213439 \, {\mathrm i} \,\\
0.3562788014 -  0.0498743179 \, {\mathrm i} \,\\
0.0076223342 -  0.0010670670 \, {\mathrm i}\\
-0.9237779303 +  0.1293166867 \, {\mathrm i}\\
-0.0204814528 +  0.0028671323 \, {\mathrm i} \end{array} \right) 
. $$
Having found $r(z)$, the parameters of the non-$P$-peridic part $\phi_{1}(t)$ are  reconstructed  from $r(z)$ via Algorithm \ref{alg2} and Theorem \ref{theo3}.
The periodic part $\phi_{2}(t)$ of $f(t)$ is now determined according to Theorem \ref{theoperiodic} using (\ref{phi2c}).
All parameters can be recovered with high precision, where 
$$ \|{\mathbf a} - \tilde{\mathbf a} \|_{\infty} = 5.3 \cdot 10^{-11}, \qquad 
\|{\mathbf b} - \tilde{\mathbf b} \|_{\infty}= 8.5 \cdot 10^{-14}, \qquad 
\|{\gamra} - \tilde{\gamra} \|_{\infty}= 0.44 \cdot 10^{-9},  $$
where $\tilde{\mathbf a}$, $\tilde{\mathbf b}$ and $\tilde{\gamra}$ are the reconstructed  parameter vectors.

Taking the same setting with $L=40$ Fourier coefficients $c_{n}(f)$, $n=1, \ldots , 40$, the algorithm chooses the interpolation values 
$c_{4}(f)$, $c_{20}(f)$ for initialization,  and then $c_{3}(f)$, $c_{22}(f)$, $c_{5}(f)$, $c_{19}(f)$, $c_{1}(f)$ in this order before terminating with error $1.11 \cdot 10^{-16}$. In this case the parameters are reconstructed with errors 
$$ \|{\mathbf a} - \tilde{\mathbf a} \|_{\infty} = 1.7 \cdot 10^{-11}, \qquad 
\|{\mathbf b} - \tilde{\mathbf b} \|_{\infty}= 1.8 \cdot 10^{-10}, \qquad 
\|{\gamra} - \tilde{\gamra} \|_{\infty}= 0.27 \cdot 10^{-10}.  $$
We consider the same example for period $P=8$ and for given Fourier coefficients $c_{n}(f)$, $n=1, \ldots , 40$, see Figure \ref{fig:bsp1} (right).
In this case the algorithm starts with the initial values $c_{8}(f)$, $c_{7}(f)$ and then takes iteratively the interpolation values $c_{9}(f)$, $c_{40}(f)$, $c_{39}(f)$, $c_{38}(f)$ and $c_{6}(f)$, before terminating with error $1.8 \cdot 10^{-16}$.
The first and the 4th component of the vector ${\mathbf w} \in {\mathbb C}^{7}$ vanish and are removed. 
These components  are related  to the periodic part $\phi_{2}(t)$, since $a_{1}P = 40$ and $a_{3}P = 8$.
We obtain a rational function $r(z)$ of type $(3,4)$ given via (\ref{rat0}) with
$$ {\mathbf S} = \left( \begin{array}{r}
7\\
9 \\
39\\
38\\
6 \end{array} \right),
\qquad 
{\mathbf w} = \left( \begin{array}{r}
-0.0001586663 +  0.0001359792 \, {\mathrm i} \,\\
-0.0055153096 +  0.0047266956 \, {\mathrm i} \,\\
-0.1263871654 +  0.1083155261 \, {\mathrm i}\\
0.7486752234  -  0.6416248870 \, {\mathrm i}\\
0.0050447491 -   0.0043234189 \, {\mathrm i} \end{array} \right)
 . $$
The rational function $r(z)$ determines $\phi_{1}(t)$. Afterwards, $\phi_{2}(t)$ is reconstructed by Theorem \ref{theoperiodic} and  (\ref{phi2c}).
The parameter vectors are recovered by the algorithm with errors
$$ \|{\mathbf a} - \tilde{\mathbf a} \|_{\infty} = 9.9 \cdot 10^{-13}, \qquad 
\|{\mathbf b} - \tilde{\mathbf b} \|_{\infty}= 5.3 \cdot 10^{-14}, \qquad 
\|{\gamra} - \tilde{\gamra} \|_{\infty}= 4.3 \cdot 10^{-11}.  $$

\begin{figure}[htbp]
\centerline{\includegraphics[scale=0.45]{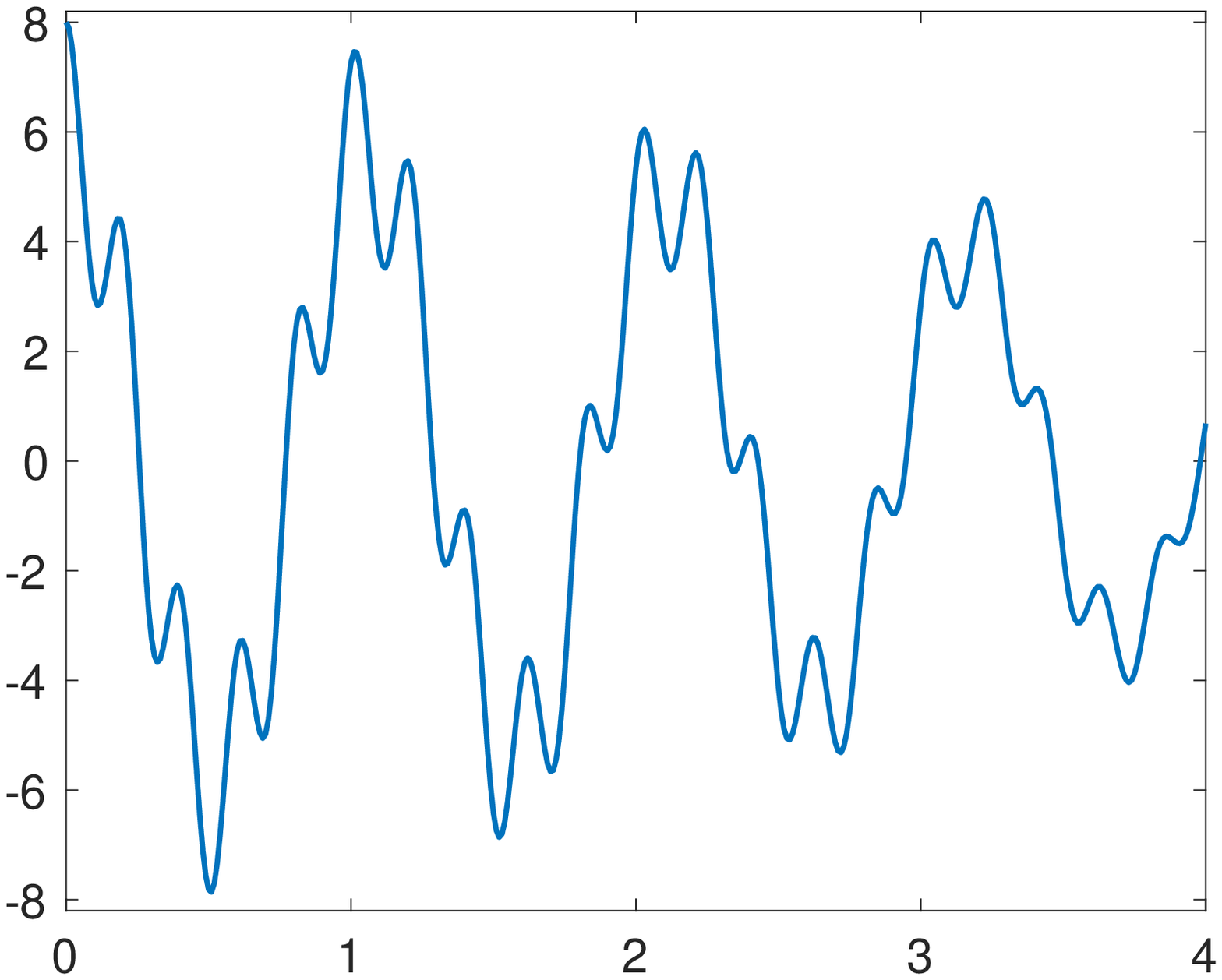} \quad \includegraphics[scale=0.45]{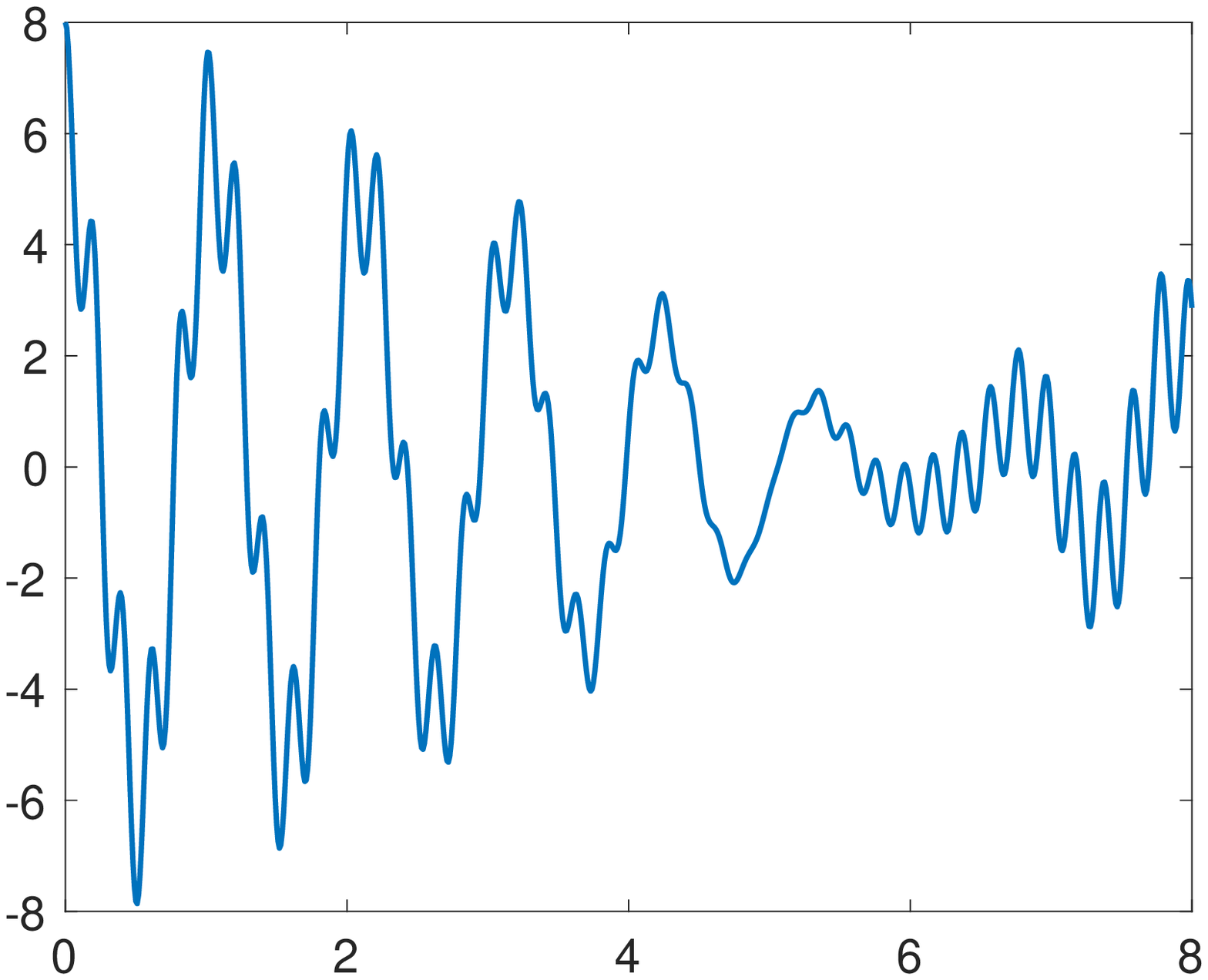}}
\medskip

\caption{Left: Plot of $f(t)$ in (\ref{bsp1}) on $[0,4]$. Right: Plot of $f(t)$ in (\ref{bsp1}) on $[0,8]$.\label{fig:bsp1}}
\end{figure}

In a second example we consider the function 
\begin{eqnarray}
\nonumber
f(t) &=&  0.5 \, \cos(2\pi (\sqrt{89}t) +0.5) +3 \cos(2\pi (\sqrt{29}t) +0.7) + 2 \cos(2\pi (\sqrt{21}t))
\\
\label{bsp2}
& & + 2\cos(2\pi (\sqrt{3}t)+0.3) +  \cos(2\pi \sqrt{2} +0.2) + \cos(2\pi(4t)+0.2),
\end{eqnarray}
i.e., $f(t)$ is given by the parameter vectors 
$${\mathbf a} = (\sqrt{89}, \, \sqrt{29}, \, \sqrt{21},\, \sqrt{3}, \, \sqrt{2}, 4), \;  {\mathbf b}= (0.5, \, 0.7, \, 0, \, 0.3, \, 0.2, \, 0.2)^{T}, \;
\gamra=(0.5, \, 3, \, 2, \, 2, \, 1,\, 1). $$
Taking $P=1$, this function has a periodic part $\phi_{2}(t)=\cos(2\pi(4t)+0.2)$, while  $\phi_{1}(t)$ consists of the other  five non-$1$-periodic terms, see Figure \ref{fig:bsp2}.
We employ 40 Fourier coefficients $c_{n}(f)$, $n=1, \ldots , 40$, for the recovery of $f$.
 Algorithm \ref{alg1} finds the values $c_{2}(f)$ and $c_{1}(f)$ for initialization. At the next iterations steps the values $c_{6}(f)$, $c_{5}(f)$, $c_{9}(f)$, $c_{10}(f)$, $c_{40}(f)$, $c_{4}(f)$, are taken for interpolation before the algorithm terminates with error $5.59 \cdot 10^{-17}$ after $7$ iteration steps.
The last component of ${\mathbf w} \in {\mathbb C}^{8}$ (which is related to the periodic part of $f$ since $a_{4} P = 4$) is vanishing and will be skipped.
We get a rational function $r(z)$ of type $(5,6)$, determined by (\ref{rat0}) via
$$ {\mathbf S} = \left( \begin{array}{r} 
2 \\ 1 \\ 6 \\ 5 \\ 9 \\ 10 \\ 40 \end{array} \right), \qquad 
{\mathbf w} = \left( \begin{array}{r}
 0.0002182770 - 0.0000242274\, {\mathrm i}\\
-0.0002310217 + 0.0000247469\, {\mathrm i}\\
 0.0045449823 - 0.0008522756\, {\mathrm i}\\
 0.0007235011 - 0.0001089651\, {\mathrm i}\\
 0.0013211292 + 0.0024068893\, {\mathrm i}\\
 0.0101627677 + 0.0030554091\, {\mathrm i}\\
-0.9998967232 + 0.0080228597\, {\mathrm i} 
 \end{array} \right). $$
 One Froissart doublet occurs in $r(z)$. This is due to the fact that the Fourier coefficient $c_{4}$ corresponding to the periodic part of $f$ has  been chosen for interpolation  by Algorithm \ref{alg1} only in the last iteration step.  According to the proof of Theorem \ref{theoperiodic}, we therefore need $7$ iteration steps  to generate a kernel of ${\mathbf A}_{8}$ of  dimension $2$. Application of Algorithm \ref{alg2} then leads to $6$  finite  eigenvalues $C_{1}, \ldots ,C_{6}$ of (\ref{eig}), while the equation system at the second step of Algorithm \ref{alg2} yields a vector $(A_{j} + {\mathrm i} B_{j})_{j=1}^{6}$ with one vanishing component. This component and the corresponding component $C_{j}$ are removed to obtain the rational function of type $(4,5)$ determining the non-periodic part $\phi_{2}$ of $f $. 
We reconstruct the  parameter vectors $\tilde{\mathbf a}$, $\tilde{\mathbf b}$ and $\tilde\gamra$ according to Theorem \ref{theo3} and Theorem \ref{theoperiodic} with (\ref{phi2c}).
with errors
$$ \|{\mathbf a} - \tilde{\mathbf a} \|_{\infty} = 9.6 \cdot 10^{-13}, \qquad 
\|{\mathbf b} - \tilde{\mathbf b} \|_{\infty}= 2.7 \cdot 10^{-12}, \qquad 
\|{\gamra} - \tilde{\gamra} \|_{\infty}= 3.4 \cdot 10^{-12}.  $$

\begin{figure}[htbp]
\centerline{\includegraphics[scale=0.45]{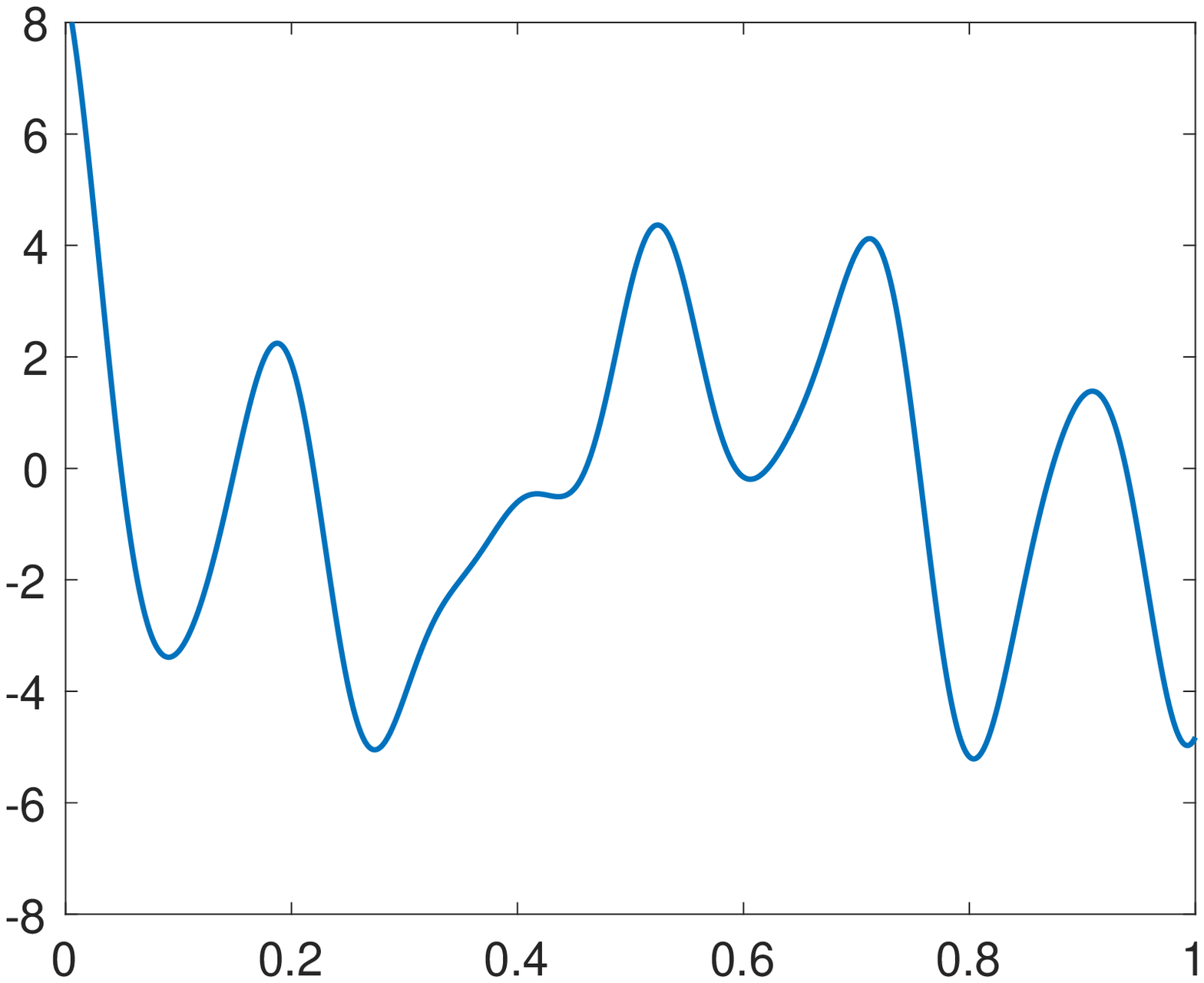}\quad \includegraphics[scale=0.45]{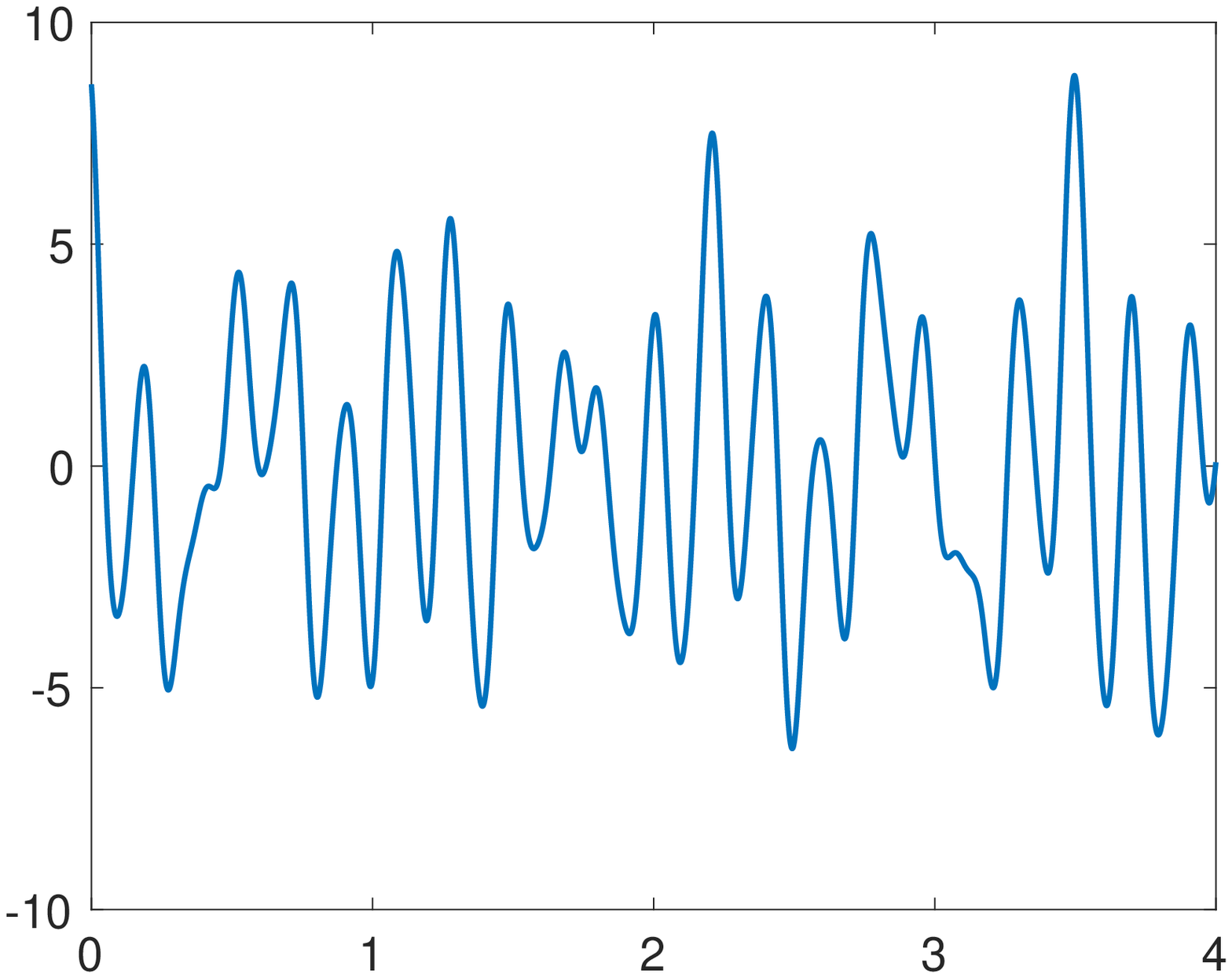}}
\caption{Plot of $f(t)$ in (\ref{bsp2}) on $[0,1]$ and on $[0,4]$.\label{fig:bsp2}}
\end{figure} 

\subsection*{Acknowledgement}
The authors gratefully acknowledge support by the German Research Foundation in the framework of the RTG 2088.

\small
\bibliographystyle{plain}
\bibliography{bibliography.bib}


\end{document}